\theoremstyle{plain} 
\newtheorem{theorem}{\indent\sc Theorem}[section]
\newtheorem{lemma}[theorem]{\indent\sc Lemma}
\newtheorem{corollary}[theorem]{\indent\sc Corollary}
\newtheorem{proposition}[theorem]{\indent\sc Proposition}
\theoremstyle{definition} 
\newtheorem{remark}[theorem]{\indent\sc Remark}
\numberwithin{equation}{section}
\DeclareMathOperator{\Hess}{Hess}
\DeclareMathOperator{\Ric}{Ric}
\def\address#1#2{\begingroup
\noindent\parbox[t]{7.8cm}{
\small{\scshape\ignorespaces#1}\par\vskip1ex
\noindent\small{\itshape E-mail address}
\/: #2\par\vskip4ex}\hfill
\endgroup}
\title{{Sharp gradient estimates on weighted manifolds with compact boundary}} 
\author{\textsc{Ha Tuan Dung, Nguyen Thac Dung and Jia-Yong Wu} 
}
\date{} 
\begin{document}

\maketitle

\footnote{ 
2010 \textit{Mathematics Subject Classification}.
Primary 58J05; Secondary 58J35}
\footnote{ 
\textit{Key words and phrases}.
Manifold with compact boundary, Heat equation,
Gradient estimate, Bakry-\'Emery Ricci curvature,
weighted mean curvature, Liouville theorem.}

\begin{abstract}
In this paper, we prove sharp gradient estimates for positive solutions to the
weighted heat equation on smooth metric measure spaces with compact boundary.
As an application, we prove Liouville theorems for ancient solutions satisfying
the Dirichlet boundary condition and some sharp growth restriction near infinity.
Our results can be regarded as a refinement of recent results due to Kunikawa and
Sakurai.
\end{abstract}

\section{Introduction}
\quad\quad In geometric analysis, gradient estimates play an important role in the studying
elliptic and parabolic equations on Riemannian manifolds. In \cite{Yau}, Yau proved
that if the Ricci curvature of manifold $(M^n,g)$ satisfies ${\rm Ric}_M\geq-(n-1)K$
for some constant $K\geq 0$, then any positive harmonic function $u$ on $(M^n,g)$ satisfies
\[
\lim\limits_{B_R(x_0)}\frac{|\nabla u|}{u}\le c_n\left(\frac{1}{R}+\sqrt{K}\right),
\]
where $B_R(x_0)$ is a geodesic ball with radius $R$ and center $x_0\in M^n$. As a
consequence, any positive harmonic function on manifolds with non-negative Ricci
curvature must be constant. Later, in \cite{LY86}, Li and Yau derived gradient
estimates for parabolic equations on Riemannian manifolds with or without boundary.
Moreover, they applied parabolic gradient estimates to study upper and lower bounds
of the heat kernel, eigenvalue estimates and Betti numbers estimates on manifolds.
Motivated by Li-Yau's gradient estimate technique, in \cite{Ham93}, Hamilton
established a new gradient estimate for the heat equation. His result may allow one
to compare the temperature of two different points at the same time on compact manifolds.
In \cite{SZ06}, Souplet and Zhang extended Hamilton's estimate to a localized version.
Their result enables the comparison of temperature distribution instantaneously,
without any lag in time, even for noncompact manifolds.

On the other hand, there exist some interesting works regarding to Riemannian manifolds
with boundary. In \cite{Roger}, Chen generalized Li-Yau gradient estimates to compact
Riemannian manifolds with possible non-convex boundary. As an application, he gave a
lower bound of the first Neumann eigenvalue in terms of geometric invariants with the
boundary. In \cite{Wang97}, Wang developed Li-Yau gradient estimates for the heat
equation under Neumann boundary condition. As an application, he obtained upper and
lower bounds for the heat kernel satisfying Neumann boundary conditions on compact
manifolds with non-convex boundary. In \cite{Hsu}, Hsu proved global Li-Yau gradient
estimates for the conjugate heat equation on compact manifolds with boundary, whose
metric evolving by the Ricci flow. In \cite{Olive},  Oliv\'e proved Li-Yau gradient
estimates for the heat equation, with Neumann boundary conditions, on a compact
Riemannian submanifold with boundary,  satisfying some integral Ricci curvature
assumption. For more related works, we refer to \cite{DKN, DLT, Wu15} and the
references therein.

Besides the above results related to Neumann boundary conditions, Kunikawa and Sakurai
\cite{KS20} recently proved Yau type gradient estimates for harmonic functions with
some Dirichlet boundary condition. More precisely, they proved that
\begin{theorem}\label{keita1}
Let $(M, g)$ be an $n$-dimensional, complete Riemannian manifold with compact boundary.
Assume ${\rm Ric}_M\ge-(n-1)K$ for some constant $K\ge0$ and the mean curvature of
$\partial M$ satisfies $H_{\partial M}\ge-(n-1)\sqrt{K}$.
Let $u:B_R(\partial M)\to(0,\infty)$ be a positive harmonic function with Dirichlet
boundary condition (i.e., it is constant on the boundary). If the derivative $u_\nu$
in the direction of the outward unit normal vector $\nu$ is non-negative over $\partial M$,
then
\[
\sup\limits_{B_{R/2}(\partial M)}\frac{|\nabla u|}{u}\leq c_n\left(\frac{1}{R}+\ \sqrt[]{K}\right).
\]
Here $H_{\partial M}$ denotes the mean curvature of $\partial M$ and the Dirichlet boundary
condition means that $u$ is constant on $\partial M$.
\end{theorem}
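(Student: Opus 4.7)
The plan is to adapt Yau's Bochner-based maximum principle, handling the boundary via a Hopf-type analysis that uses the Dirichlet condition, the positivity of $u_\nu$, and the mean-curvature lower bound.

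I would set $f=\log u$, so that $\Delta f=-|\nabla f|^2$, and write $w=|\nabla f|^2$. Combining the Bochner identity with $|\Hess f|^2\ge(\Delta f)^2/n=w^2/n$ and $\Ric\ge-(n-1)K$ yields
\[
\Delta w \;\geq\; \tfrac{2}{n}w^{2} \;-\; 2\langle\nabla f,\nabla w\rangle \;-\; 2(n-1)K\,w .
\]
Next introduce a cutoff $\phi=\phi(r)$ depending on $r(x)=\dist(x,\partial M)$, with $\phi\equiv 1$ on $\{r\le R/2\}$, $\phi\equiv 0$ on $\{r\ge R\}$, and $|\phi'|^2/\phi+|\phi''|\le c/R^2$. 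Because $\phi\equiv 1$ in a neighborhood of $\partial M$, one automatically has $\partial_\nu\phi=0$ on $\partial M$. Set $G=\phi^2 w$ on $\overline{B_R(\partial M)}$. If the maximum of $G$ is attained at an interior point $p$, then $\nabla G(p)=0$ and $\Delta G(p)\le 0$; the classical Li--Yau manipulation of the Bochner inequality above, together with the Laplacian comparison $\Delta r\ge-(n-1)\sqrt{K}\coth(\sqrt{K}\,r)$ (whose initial data on $\partial M$ is controlled by $H_{\partial M}\ge-(n-1)\sqrt{K}$), yields $G(p)\le c_n(1/R^2+K)$.

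The boundary case is the essential point. If $p\in\partial M$, the Hopf boundary inequality gives $\partial_\nu G(p)\ge 0$, and since $\phi_\nu=0$ on $\partial M$ this reduces to $\partial_\nu w(p)\ge 0$. The Dirichlet condition forces $f$ to be constant on $\partial M$, so $\nabla f=f_\nu\nu$ and $w=f_\nu^2$ on $\partial M$. The Fermi-coordinate decomposition $\Delta u=u_{\nu\nu}+H_{\partial M}u_\nu+\Delta_{\partial M}u$ combined with $\Delta u=0$ gives $u_{\nu\nu}=-H_{\partial M}u_\nu$, hence $f_{\nu\nu}=-H_{\partial M}f_\nu-f_\nu^2$, and so
\[
\partial_\nu w \;=\; 2f_\nu\,\Hess(f)(\nu,\nu) \;=\; -2H_{\partial M}f_\nu^{2}-2f_\nu^{3}.
\]
The hypothesis $u_\nu\ge 0$ gives $f_\nu=\sqrt{w}$, and then $\partial_\nu w(p)\ge 0$ forces $\sqrt{w(p)}\le -H_{\partial M}(p)\le(n-1)\sqrt{K}$, hence $w(p)\le(n-1)^2K$. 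Combining the interior and boundary cases and restricting to $\{\phi\equiv 1\}\supset B_{R/2}(\partial M)$ delivers the stated estimate on $|\nabla u|/u=\sqrt{w}$.

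The main obstacle is the boundary-maximum analysis just sketched: keeping track of signs is delicate, and both hypotheses---$u$ constant on $\partial M$ (to kill the tangential derivatives of $f$) and $u_\nu\ge 0$ (so that $f_\nu=+\sqrt{w}$ unambiguously)---are essential for the expression for $\partial_\nu w$ to combine cleanly with the mean-curvature lower bound. A secondary technical issue is that $r$ can fail to be smooth past the focal locus of $\partial M$; since $\partial M$ is compact its focal radius is positive, so one may restrict to $R$ below this radius, or replace $r$ by a smooth approximation with the same qualitative behavior at $\partial M$.
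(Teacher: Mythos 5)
Your proposal follows essentially the same route as the paper: the paper never proves Theorem \ref{keita1} directly (it is quoted from \cite{KS20}), but its proof of the generalization, Theorem \ref{GE1}, is exactly your two-case maximum-principle argument, and your boundary computation is literally the paper's. Where the paper invokes the Reilly formula (Proposition \ref{Reilly}) to get $0\le(|\nabla\log u|^2)_\nu=2\frac{(u_\nu)^2}{u^2}(-\frac{u_\nu}{u}-H)$, you rederive the same identity by the Fermi decomposition $\Delta u=u_{\nu\nu}+Hu_\nu+\Delta_{\partial M}u$; both give $\sqrt{w}\le -H\le (n-1)\sqrt K$ at a boundary maximum. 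In the interior case the paper works with $\phi=|\nabla\log u|$ and the refined inequality of Lemma \ref{lixd} together with the cutoff $(R^2-\rho^2)$, while you use the plain Bochner inequality for $w=|\nabla\log u|^2$ with a smooth cutoff $\phi^2$; both are standard and deliver the same quadratic inequality for the localized quantity.

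Two defects in your sketch should be repaired. First, the Laplacian comparison is quoted in the wrong direction: under $\Ric\ge-(n-1)K$ and $H_{\partial M}\ge-(n-1)\sqrt K$ what holds (and what the argument needs, since your cutoff is decreasing in $r$ so the term $\phi'\Delta r$ must be bounded \emph{below} via an \emph{upper} bound on $\Delta r$) is an upper bound of the form $\Delta\rho\le (n-1)K R+(n-1)\sqrt K$ on $B_R(\partial M)$ (the paper's Theorem \ref{Lapcom}, from Sakurai); a lower bound $\Delta r\ge -(n-1)\sqrt K\coth(\sqrt K r)$ does not follow from these hypotheses and would be useless here anyway. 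Second, your proposed handling of the non-smoothness of $r$ --- restricting $R$ below the focal radius --- would only prove the estimate for small $R$, not the stated result for all $R$; the correct fix is Calabi's barrier trick at the cut locus, which is what the paper uses. Both issues are local to the write-up and do not affect the soundness of the overall approach.
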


Kunikawa and Sakurai \cite{KS20} further proved local Souplet-Zhang gradient
estimates for the heat equations with Dirichlet boundary condition.
\begin{theorem}\label{keita2}
Let $(M, g)$ be an $n$-dimensional complete Riemannian manifold with compact boundary.
Assume ${\rm Ric}_M\ge-(n-1)K$ and $H_{\partial M}\ge0$. Let $0<u<A$ for
some constant $A>0$, be a solution to the heat equation on
$Q_{R,T}(\partial M):=B_R(\partial M)\times[-T,0]$. If $u$ satisfies the Dirichlet boundary
condition, and $u_\nu\ge0$ and $\partial_tu\le0$ over $\partial M\times[-T, 0]$, then
\[
\sup\limits_{Q_{R/2, T/4}(\partial M)}\frac{|\nabla u|}{u}\le c_n\left(\frac{1}{R}+\frac{1}{\sqrt{T}}+\sqrt{K}\right)\left(1+\log\frac{A}{u}\right).
\]
\end{theorem}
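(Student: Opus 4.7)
My plan is to run a local Souplet--Zhang maximum principle argument, with the new ingredient being a sign analysis of the outward normal derivative of the auxiliary quantity on $\partial M$. I would set $f=\log(u/A)$, so $f<0$ and the heat equation becomes $f_t=\Delta f+|\nabla f|^2$, and define
$$w=\frac{|\nabla f|^2}{(1-f)^2}.$$
A standard Bochner computation using ${\rm Ric}\ge-(n-1)K$ should yield the Souplet--Zhang differential inequality
$$(\Delta-\partial_t)w\ge\frac{2f}{1-f}\langle\nabla f,\nabla w\rangle+2(1-f)w^2-2(n-1)Kw.$$
Next I would introduce a radial spatial cutoff $\phi=\phi(d(\cdot,\partial M))$ supported in $[0,R]$ with $\phi\equiv1$ on $[0,R/2]$, satisfying the usual $|\phi'|^2/\phi\le C/R^2$ and $\phi''\ge -C/R^2$, and crucially $\phi'(0)=0$; combined with a time cutoff $\chi(t)$ that equals $1$ on $[-T/4,0]$ and vanishes at $t=-T$. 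Set $\Psi=\phi\chi$ and $G=\Psi w$; the goal is to bound $G$ at its maximum over $\overline{Q_{R,T}(\partial M)}$.

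The hard part will be the case where $G$ attains a positive maximum at a boundary point $(x_0,t_0)$ with $x_0\in\partial M$. On $\partial M$, $u$ (hence $f$) is tangentially constant, so $|\nabla f|^2=f_\nu^2$, and the decomposition $\Delta f=f_{\nu\nu}+\Delta^{\partial M}f+H_{\partial M}f_\nu$ combined with the heat equation gives $f_{\nu\nu}=f_t-f_\nu^2-H_{\partial M}f_\nu$. Differentiating $w$ in the outward normal direction and substituting, I expect
$$\partial_\nu w=\frac{2f_\nu f_t}{(1-f)^2}-\frac{2H_{\partial M}f_\nu^2}{(1-f)^2}+\frac{2ff_\nu^3}{(1-f)^3}.$$
The key observation is that the three hypotheses $u_\nu\ge0$, $\partial_tu\le0$, $H_{\partial M}\ge0$, together with $f<0$, force each term to be non-positive, so $\partial_\nu w\le0$ on $\partial M$. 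With $\phi'(0)=0$ we also have $\partial_\nu\Psi=0$ there, so $\partial_\nu G=\Psi\,\partial_\nu w\le0$; but a positive boundary maximum forces $\partial_\nu G\ge0$. Hence $\partial_\nu w(x_0,t_0)=0$, and since $f<0$ strictly the third term forces $f_\nu(x_0,t_0)=0$, giving $w(x_0,t_0)=0$, contradicting $G(x_0,t_0)>0$.

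Once the boundary case is excluded, the maximum of $G$ must occur at an interior space-time point, where $\nabla G=0$, $\Delta G\le0$ and $\partial_tG\ge0$. Then the standard Souplet--Zhang bookkeeping applies: multiplying the Bochner inequality by $\Psi$, using $\nabla(\Psi w)=0$ to rewrite the drift term $\Psi\langle\nabla f,\nabla w\rangle$, and absorbing the cross-terms involving $\nabla\Psi$ and $\Delta\Psi$ by Cauchy--Schwarz together with the Laplace comparison for $d(\cdot,\partial M)$ (which follows from ${\rm Ric}\ge-(n-1)K$ and $H_{\partial M}\ge0$), the quadratic self-term $2(1-f)\Psi w^2$ dominates and yields $(\Psi w)(x_0,t_0)\le C_n(1/R^2+1/T+K)$. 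Since $\Psi\equiv1$ on $Q_{R/2,T/4}(\partial M)$, this gives $w\le C_n(1/R^2+1/T+K)$ there; taking square roots and using $1-f=1+\log(A/u)$ yields the claimed estimate. The only genuinely new input compared to the boundaryless Souplet--Zhang proof is the non-positivity of $\partial_\nu w$ on $\partial M$, which encodes the compatibility of the three Dirichlet-type hypotheses with this particular choice of auxiliary quantity.
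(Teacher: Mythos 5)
Your argument is essentially correct, and the boundary computation---the genuinely new step relative to the boundaryless Souplet--Zhang proof---checks out: on $\partial M$ the Dirichlet condition gives $\nabla f=f_\nu\,\nu$, your identity
$\partial_\nu w=\frac{2f_\nu f_t}{(1-f)^2}-\frac{2H_{\partial M}f_\nu^2}{(1-f)^2}+\frac{2ff_\nu^3}{(1-f)^3}$
is correct, each term is non-positive under the three sign hypotheses, and vanishing of the third term together with $f<0$ strict forces $f_\nu=0$ and hence $w=0$, contradicting a positive boundary maximum. This is precisely the route of Kunikawa and Sakurai, which is where this statement comes from: the present paper does not reprove Theorem 1.2 but only quotes it, and then establishes a sharper weighted analogue (Theorem 1.4) by a different choice of auxiliary quantity. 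There the authors set $h=\sqrt{1+\log(A/u)}$ and $w=|\nabla h|^2$ (rather than your $|\nabla f|^2/(1-f)^2$ with $f=\log(u/A)$), derive the corresponding Bochner-type inequality in Lemma 2.4, and rule out a boundary maximum via the Reilly formula $w_\nu=2h_\nu(\Delta_f h-h_\nu H_f)$ instead of your direct normal-derivative expansion; the conclusion at such a point is again $w=0$. The payoff of their choice is a quadratically better dependence on the oscillation: the final bound carries $\sqrt{1+\log(A/u)}$ in place of your (and Theorem 1.2's) $1+\log(A/u)$, which is exactly what makes the Liouville theorem (Corollary 1.6) sharp for growth of order $e^{o(\rho(x)+|t|)}$. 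Two small points you should make explicit in a full write-up: the Calabi trick to handle the cut locus of $\partial M$ when the interior maximum lands where $\rho=d(\cdot,\partial M)$ is not smooth, and the fact that the Laplacian comparison $\Delta\rho\le(n-1)KR$ on $B_R(\partial M)$ needs both ${\rm Ric}\ge-(n-1)K$ and $H_{\partial M}\ge0$ (this is Theorem 2.1 of the paper).
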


Inspired by the Kunikawa-Sakurai work, in this paper we will study gradient
estimates for weighted harmonic functions and weighted heat equation with Dirichlet boundary
condition. In particular, our results improve Theorems \ref{keita1} and \ref{keita2}
on the manifold case.

Before stating our results, we fix some notations. A smooth metric measure space (also called
weighted manifold) is a triple $(M,g,e^{-f}dv)$, where $(M,g)$ is a complete $n$-dimensional
Riemannian manifold, $f$ is a smooth real-valued function on $M$ and $dv$ is the volume form
with respect to the metric $g$. Denote by $\nabla$ and $\Hess$ the gradient and Hessian operators.
The weighted Laplacian is defined by $\Delta_f:=\Delta-\nabla f\cdot\nabla$. On $(M,g,e^{-f}dv)$,
a natural generalization of the Ricci curvature is called $m$-Bakry-\'Emery Ricci curvature, defined by
\[
{\rm Ric}_f^m:={\rm Ric}+\Hess f-\frac{{\nabla f \otimes \nabla f}}{{m-n}},\quad n\le m\le\infty.
\]
When $m=n$, we regard $f$ to be constant and ${\rm Ric}_f^m={\rm Ric}$. When $m=\infty$,
we have $\infty$-Bakry-\'Emery Ricci curvature ${\rm Ric}_f={\rm Ric}_f^{\infty}$. For a complete
manifold $M$ with compact boundary $\partial M$, let $H_f:=H-\nabla f\cdot\nu$ be weighted
mean curvature on $\partial M$, where $\nu$ is the outer unit normal vector to $\partial M$
(see e.g. \cite{XMZ, LiWei}) and the mean curvature $H$ is defined with respect to $\nu$.
But if $\nu$ is the inner unit normal vector to $\partial M$, then $H_f:=H+\nabla f\cdot\nu$.
We refer the reader to \cite{Saku} for further discussions on $H_f$ and weighted manifolds
with boundary.

Let us first state gradient estimates for the $f$-harmonic function with Dirichlet boundary
condition on $(M,g,e^{-f}dv)$, which is a mild generalization of Theorem \ref{keita1}.

\begin{theorem}\label{GE1}
Let $(M,g,e^{-f}dv)$ be an $n$-dimensional smooth metric measure space with compact boundary.
Assume ${\rm Ric}_f^m\ge-(m-1)K$ and $H_f\ge-L$ for some constants $K,L\ge0$. Let
$u: B_R(\partial M)\to (0, \infty)$ be a positive $f$-harmonic function (i.e. $\Delta_fu=0$)
with Dirichlet boundary condition. If the derivative $u_\nu$ in the direction of the outward
unit normal vector $\nu$ is non-negative over $\partial M$, then
\[
\sup\limits_{B_{R/2}(\partial M)}\frac{|\nabla u|}{u}\le
c_m\left(\frac{1}{R}+L+\sqrt{K}\right).
\]
\end{theorem}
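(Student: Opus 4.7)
My plan is the classical Cheng--Yau maximum-principle scheme adapted to the weighted, boundary setting: put $w=\log u$ and $F=|\nabla w|^2$, so that $\Delta_f w=-F$, and combine the weighted Bochner formula with the refined Cauchy--Schwarz inequality $|\Hess w|^2+\tfrac{(\nabla f\cdot\nabla w)^2}{m-n}\ge\tfrac{(\Delta_f w)^2}{m}$ and $\Ric_f^m\ge-(m-1)K$ to obtain
\[
\tfrac{1}{2}\Delta_f F \;\ge\; \tfrac{F^2}{m}\;-\;\langle\nabla F,\nabla w\rangle\;-\;(m-1)KF.
\]
I will then introduce a radial cutoff $\phi=\bar\phi(r)$ with $r(x)=\dist(x,\partial M)$, satisfying $\bar\phi\equiv 1$ on $[0,R/2]$, $\mathrm{supp}\,\bar\phi\subset[0,R]$, and the standard bounds $|\bar\phi'|^2/\bar\phi\le C/R^2$, $|\bar\phi''|\le C/R^2$, and examine the maximum $p_0$ of $G:=\phi F$ over $\overline{B_R(\partial M)}$. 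Since $\phi$ vanishes on the outer boundary $\{r=R\}$, either $p_0$ is interior to $B_R(\partial M)$ or $p_0\in\partial M$.

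The interior case is routine. At $p_0$ one has $\nabla G=0$ and $\Delta_f G\le 0$; expanding $\Delta_f G=\phi\Delta_f F+2\langle\nabla\phi,\nabla F\rangle+F\Delta_f\phi$, substituting the Bochner inequality and $\nabla F=-(F/\phi)\nabla\phi$, and absorbing the cross term via $|\langle\nabla\phi,\nabla w\rangle|\le\tfrac{\phi F}{2m}+\tfrac{m|\nabla\phi|^2}{2\phi}$ lead, after dividing by $F$, to
\[
\tfrac{\phi F}{m}\;\le\;\tfrac{(m+2)|\nabla\phi|^2}{\phi}+2(m-1)K\phi-\Delta_f\phi \quad \text{at } p_0.
\]
A weighted Laplacian comparison for the distance to $\partial M$ (a Riccati comparison along the normal geodesics leaving $\partial M$, using $\Ric_f^m\ge-(m-1)K$ with initial condition dictated by $H_f\ge -L$) controls $\Delta_f\phi$ on $\mathrm{supp}\,\phi$ in terms of $K$, $L$, $R$; combined with the cutoff bounds this yields $G(p_0)\le c_m(R^{-2}+K+L/R)$, whence by AM--GM on the $L/R$ term $\sqrt{G(p_0)}\le c_m(R^{-1}+L+\sqrt K)$.

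The boundary case is where the Dirichlet condition enters. On $\partial M$ the function $w$ is constant, so $\nabla w=w_\nu\nu$. For a local tangent frame $\{e_i\}$ along $\partial M$, the identity $\Hess w(e_i,e_i)=-(\nabla_{e_i}e_i)w = w_\nu\langle\nabla_{e_i}\nu,e_i\rangle$ sums to $\sum_i\Hess w(e_i,e_i)=Hw_\nu$; combining with $\Delta w=Hw_\nu+\Hess w(\nu,\nu)$ and $\Delta_f w=-w_\nu^2$ gives
\[
\Hess w(\nu,\nu)=-H_f w_\nu-w_\nu^2,\qquad \nu F=2w_\nu\Hess w(\nu,\nu)\;\le\;2w_\nu^2(L-w_\nu).
\]
Since $\phi\equiv 1$ near $\partial M$, the boundary maximum of $G$ forces $\nu F\ge 0$; together with $u_\nu\ge 0$ (hence $w_\nu\ge 0$), this yields $w_\nu\le L$ and so $F(p_0)=w_\nu^2\le L^2$. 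Combining the two cases on $B_{R/2}(\partial M)$, where $G=F$, produces the announced estimate.

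I expect the main obstacle to be the weighted Laplacian comparison for the distance to $\partial M$, because the initial condition for the Riccati equation is given by the weighted mean-curvature lower bound $H_f\ge -L$ rather than by a point-focal condition, and one must sidestep the cut locus of $\partial M$ via Calabi's barrier argument. A secondary delicacy is the sign bookkeeping in the boundary calculation: without the Hopf-type assumption $u_\nu\ge 0$, the inequality $w_\nu\le L$ does not control $|w_\nu|$, so this hypothesis is essential at the boundary maximum.
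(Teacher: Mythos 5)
Your argument is correct and reaches the stated bound, but it takes a genuinely different route from the paper in the interior case, so a comparison is in order. The paper does not run the Bochner formula on $F=|\nabla\log u|^2$; it applies the maximum principle to $G=(R^2-\rho^2)\phi$ with $\phi=|\nabla\log u|$ itself, and its key differential inequality is Li's refined estimate (Lemma \ref{lixd}), namely $\Delta_f\phi\ge-(m-1)K\phi-2\tfrac{m-2}{m-1}\tfrac{\langle\nabla\phi,\nabla u\rangle}{u}+\tfrac{\phi^3}{n-1}$. Your substitute --- the weighted Bochner formula combined with $|\Hess w|^2+\tfrac{(\nabla f\cdot\nabla w)^2}{m-n}\ge\tfrac{(\Delta_f w)^2}{m}$ and $\Ric_f(\nabla w,\nabla w)=\Ric_f^m(\nabla w,\nabla w)+\tfrac{(\nabla f\cdot\nabla w)^2}{m-n}$ --- is the standard Cheng--Yau mechanism and is entirely adequate here, since only a dimensional constant $c_m$ is claimed; it is also technically cleaner ($F$ is smooth where $\phi$ is merely Lipschitz at zeros of $\nabla u$) and avoids the refined Kato-type inequality that the authors themselves single out as the obstruction to treating $\Ric_f$ lower bounds. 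What Lemma \ref{lixd} buys the paper is the cubic term $\phi^3/(n-1)$ with the dimension $n$ in place of $m$, hence potentially sharper constants. The remaining ingredients coincide: both proofs rest on the weighted Laplacian comparison for $\rho_{\partial M}$ of Theorem \ref{Lapcom} (the paper quotes Sakurai's Lemma 6.1 rather than re-deriving the Riccati comparison, and both invoke Calabi's trick at the cut locus), and your boundary computation $\nu F=2w_\nu(-H_f w_\nu-w_\nu^2)\le 2w_\nu^2(L-w_\nu)$, together with $\nu G\ge 0$ and $u_\nu\ge 0$, is exactly the paper's application of the Reilly formula (Proposition \ref{Reilly}), derived by hand from the second fundamental form instead of being quoted. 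The choice of cutoff ($R^2-\rho^2$ in the paper versus your smooth radial bump) is cosmetic; either closes the argument.
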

We do not know if there exists a gradient estimate under a lower bound of ${\rm Ric}_f$.
Our present proof strongly depends on a refined Kato inequality in Lemma \ref{lixd},
which seems to be not true on weighted manifolds (see \cite{Kato}). When $L=(m-1)\sqrt{K}$
and $f$ is constant, our result returns to Theorem \ref{keita1}.

Second, we prove elliptic gradient estimates for the $f$-heat equation with Dirichlet
boundary condition on $(M,g,e^{-f}dv)$ by improving the argument of \cite{DungDung19},
which seems to be new even for manifolds.
\begin{theorem}\label{GE2}
Let $(M, g, e^{-f}dv)$ be a complete smooth metric measure space with compact boundary.
Assume ${\rm Ric}_{f}\ge-(n-1)K$ for some constant $K\ge0$ and $H_f\ge 0$.
Let $0<u\le A$ for some constant $A>0$ be a solution to the $f$-heat
equation $u_t=\Delta_fu$ on $Q_{R, T}(\partial M):=B_R(\partial M)\times[-T, 0]$.
If $u$ satisfies the Dirichlet boundary condition (i.e. $u(\cdot, t)\mid_{\partial M}$
is constant for each fixed $t\in[-T, 0]$), $u_\nu\geq0$ and $\partial_tu\le0$ over
$\partial M\times[-T, 0]$, then
\[
\sup\limits_{Q_{R/2, T/2}(\partial M)}\frac{|\nabla u|}{u}
\le c_n\left(\frac{\sqrt{D}+1}{R}+\frac{1}{\sqrt{T}}+\sqrt{K}\right)\sqrt{1+\log\frac{A}{u}},
\]
where $D=1+\log A-\log(\inf\limits_{Q_{R,T}(\partial M)}u)$.
\end{theorem}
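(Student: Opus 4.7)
The plan is to adapt the scheme of \cite{DungDung19} to this boundary setting, combining it with a Hopf-type boundary argument in the spirit of \cite{KS20}. Set $v := \log u$ and $H := 1+\log A - v = 1+\log(A/u)\ge 1$, so that $(\partial_t-\Delta_f)v = |\nabla v|^2$ and $(\partial_t-\Delta_f)H = -|\nabla v|^2$. The auxiliary function is
\[
w \;:=\; \frac{|\nabla v|^2}{H};
\]
the use of $H$ (rather than $H^2$) in the denominator is exactly what yields the improved $\sqrt{1+\log(A/u)}$ factor on the right-hand side, as opposed to the $1+\log(A/u)$ of Souplet--Zhang type estimates. Using the weighted Bochner formula
\[
\tfrac12\Delta_f|\nabla v|^2 = |\Hess v|^2 + \langle\nabla v,\nabla \Delta_f v\rangle + \Ric_f(\nabla v,\nabla v),
\]
the curvature hypothesis $\Ric_f\ge -(n-1)K$, and the quotient rule, I expect to derive
\[
(\partial_t-\Delta_f)w \;\le\; 2\Bigl(1-\tfrac{1}{H}\Bigr)\langle\nabla v,\nabla w\rangle \;-\; w^2 \;+\; 2(n-1)K\,w,
\]
in which the decisive $-w^2$ term comes from $(\partial_t-\Delta_f)H = -Hw$ through the quotient rule and will absorb the curvature and cutoff contributions at a maximum.

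Next I analyze $w$ on $\partial M$. The Dirichlet hypothesis forces $\nabla v = v_\nu\nu$ there, so $|\nabla v|^2 = v_\nu^2$ with $v_\nu = u_\nu/u \ge 0$. Combining the boundary identity $\Delta v = v_{\nu\nu} + v_\nu H$ (valid for Dirichlet data) with $\Delta v = \Delta_f v + v_\nu f_\nu$, $\Delta_f v = v_t - v_\nu^2$ and $H_f = H - f_\nu$ gives $v_{\nu\nu} = v_t - v_\nu^2 - v_\nu H_f$, which is $\le 0$ by the hypotheses $v_t\le 0$, $v_\nu\ge 0$ and $H_f\ge 0$. Differentiating $w = v_\nu^2/H$ in the outer normal direction and using $\partial_\nu H = -v_\nu$ then yields
\[
\partial_\nu w \;=\; \frac{2v_\nu v_t}{H} - \frac{2v_\nu^2 H_f}{H} + \frac{v_\nu^3(1-2H)}{H^2} \;\le\; 0,
\]
since $H\ge 1$ makes $1-2H\le -1$. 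By the Hopf boundary point lemma, this prevents any strict maximum of $\eta w$ on $\partial M$ whenever the cutoff $\eta$ is locally constant there; the degenerate case $v_\nu = 0$ forces $w=0$ and is trivial at such points.

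For the localization, I take a cutoff $\eta(x,t) = \psi(r(x))\chi(t)$, where $r(x) = \dist(x,\partial M)$, $\psi$ is smooth non-increasing with $\psi\equiv 1$ on $[0,R/2]$ and $\psi\equiv 0$ outside $[0,R]$, and $\chi$ is a standard time cutoff equal to $1$ on $[-T/2,0]$. A weighted comparison for $\Delta_f r$ (with $\Ric_f \ge -(n-1)K$ as the curvature input and $H_f\ge 0$ as initial datum for the Riccati equation satisfied by the weighted mean curvature of equidistant hypersurfaces) yields $|\nabla\eta|^2/\eta\le c/R^2$, $|\Delta_f\eta|\le c(1/R^2 + \sqrt{K}/R)$, and $|\eta_t|\le c/T$. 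By the boundary analysis, any maximum of $\eta w$ on $\overline{Q_{R,T}(\partial M)}$ is attained at an interior point (or at $t=0$), where $\nabla(\eta w)=0$ and $(\partial_t-\Delta_f)(\eta w)\ge 0$. Feeding these relations into the evolution inequality, writing $|\nabla v| = \sqrt{wH}$, bounding $H\le D$ at the maximum, and applying Young's inequality to the cross term $\eta\langle\nabla v,\nabla w\rangle = -w\langle\nabla v,\nabla\eta\rangle$ produces, with $z := \eta w$,
\[
z^2 \;\le\; C\sqrt{D}\,\frac{z^{3/2}}{R} + C\Bigl(K + \frac{1}{T} + \frac{1}{R^2} + \frac{\sqrt{K}}{R}\Bigr)z.
\]
Dividing by $z$, setting $y = z^{1/2}$ and applying the quadratic formula gives $y \le C((\sqrt{D}+1)/R + 1/\sqrt{T} + \sqrt{K})$. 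Restricting to $Q_{R/2,T/2}(\partial M)$, where $\eta\equiv 1$, and recalling $|\nabla u|/u = \sqrt{wH} = \sqrt{w}\,\sqrt{1+\log(A/u)}$ yields the claimed estimate.

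The most delicate step is the weighted $f$-Laplacian comparison for $r$ under the $\infty$-Bakry--\'Emery assumption $\Ric_f \ge -(n-1)K$ alone, since the usual weighted Laplacian comparison requires an $m$-Bakry--\'Emery bound. Here one must use $H_f\ge 0$ essentially as an initial condition in the Riccati comparison for the weighted mean curvature of equidistant hypersurfaces, and interpret the resulting inequality in the barrier (Calabi) sense across the cut locus of $\partial M$. Once that is in place, the remainder is bookkeeping of constants; the factor $\sqrt{D}$ in the final estimate is an inevitable consequence of using the pointwise bound $H\le D$ in the gradient cross term.
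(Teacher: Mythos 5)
Your proposal is essentially the paper's own proof: your auxiliary function $w=|\nabla v|^2/H$ equals $4|\nabla\sqrt{H}|^2$, i.e.\ (up to a constant) the paper's $w=|\nabla h|^2$ with $h=\sqrt{1+\log(A/u)}$, and the three pillars --- the Bochner-based evolution inequality, the exclusion of a boundary maximum via the Reilly formula under the Dirichlet condition together with $u_\nu\ge0$, $u_t\le0$, $H_f\ge0$, and the cutoff/maximum-principle argument using Sakurai's weighted Laplacian comparison from $\partial M$ --- are exactly those of the paper. The only (harmless) slips are that under $\mathrm{Ric}_f\ge-(n-1)K$, $H_f\ge0$ the comparison gives $\Delta_f\rho\le(n-1)KR$, hence $|\Delta_f\eta|\le c(1/R^2+K)$ rather than $c(1/R^2+\sqrt{K}/R)$ (this changes nothing after the final square root), and the decisive $-w^2$ actually comes from the term $2\langle\nabla|\nabla v|^2,\nabla v\rangle/H$ rather than from $(\partial_t-\Delta_f)H=-Hw$ (which contributes $+w^2$), though the stated differential inequality is nonetheless correct.
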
	

As a consequence, we obtain the following Liouville theorems.

\begin{corollary}\label{Liouhar}
Let $(M, g, e^{-f}dv)$ be a complete smooth metric measure space with compact boundary.
Assume ${\rm Ric}^m_f\ge 0$ and $H_f\ge0$.  Let
$u: M\to (0, \infty)$ be a positive $f$-harmonic function 
with Dirichlet boundary condition. If $u_\nu\ge 0$ over $\partial M$, then
$u$ is constant.
\end{corollary}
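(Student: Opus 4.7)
The plan is simply to send the radius to infinity in Theorem~\ref{GE1}. Under the hypotheses ${\rm Ric}^m_f \ge 0$ and $H_f \ge 0$, one applies Theorem~\ref{GE1} with $K=0$ and $L=0$; the Dirichlet condition together with $u_\nu \ge 0$ on $\partial M$ are exactly the remaining boundary assumptions of that theorem. Consequently, for every $R > 0$ one obtains
\[
\sup_{B_{R/2}(\partial M)} \frac{|\nabla u|}{u} \le \frac{c_m}{R}.
\]

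Next, I would verify the exhaustion $M = \bigcup_{R>0} B_R(\partial M)$. Since $\partial M$ is compact and $(M,g)$ is complete, the distance $\dist(p,\partial M)$ is finite for every $p \in M$, so any choice of $R$ larger than $2\dist(p,\partial M)$ places $p$ inside $B_{R/2}(\partial M)$. Letting $R \to \infty$ in the displayed estimate forces $|\nabla u|(p)/u(p) = 0$ at every $p \in M$, and because $u > 0$ we conclude $|\nabla u| \equiv 0$, so $u$ is constant.

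I do not expect a genuine obstacle here: the substantive content is fully packaged inside Theorem~\ref{GE1}, whose proof relies (as the remark after that theorem indicates) on the refined Kato inequality of Lemma~\ref{lixd}. The corollary merely pushes the sharp $1/R$ decay produced by the theorem to its natural limit. The only point that warrants explicit mention is the exhaustion property of the tubular neighbourhoods $B_R(\partial M)$, which is a routine consequence of compactness of $\partial M$ together with geodesic completeness of $(M,g)$.
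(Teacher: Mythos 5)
Your proposal matches the paper's own argument, which simply states that the corollary follows by letting $R\to\infty$ in Theorem~\ref{GE1} with $K=L=0$. Your added verification of the exhaustion $M=\bigcup_{R>0}B_R(\partial M)$ is a harmless (and correct) piece of bookkeeping that the paper leaves implicit.
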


\begin{corollary}\label{liouville}
Let $(M, g, e^{-f}dv)$ be a complete smooth metric measure space with compact boundary.
Assume ${\rm Ric}_f\ge 0$ and $H_f\geq0$. Let $u$ be an ancient solution to
the $f$-heat equation.
\begin{enumerate}
\item Suppose that $u>0$. If $u_\nu\geq0$ and $\partial_tu\leq0$ on $\partial M\times(-\infty, 0]$ and
$u(x,t)=e^{o(\rho(x)+{|t|})}$ near infinity, then $u$ is constant.
\item If $u_\nu\geq0$ and $\partial_tu\leq0$ on $\partial M\times(-\infty, 0]$ and
$u(x,t)={o(\rho(x)+{|t|})}$ near infinity, then $u$ is constant.
\end{enumerate}
Here, $\rho(x)=\rho_{\partial M}(x)$ denotes the Riemannian distance from the boundary.
\end{corollary}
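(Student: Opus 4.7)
The plan is to apply Theorem \ref{GE2} on cylinders $Q_{R,T}(\partial M)$ of increasing size, evaluate at a fixed reference point $(x_0,t_0)$ in the interior, and drive the right-hand side of the estimate to zero. Since ${\rm Ric}_f\ge 0$ gives $K=0$, the estimate reads
\[
\frac{|\nabla u|(x_0,t_0)}{u(x_0,t_0)}\le c_n\left(\frac{\sqrt{D}+1}{R}+\frac{1}{\sqrt{T}}\right)\sqrt{1+\log\frac{A}{u(x_0,t_0)}},
\]
with $A=\sup_{Q_{R,T}}u$ and $D=1+\log A-\log\inf_{Q_{R,T}}u$. Once $|\nabla u|(x_0,t_0)=0$ at every interior point, $u(\cdot,t)$ is spatially constant for each $t$, and the equation $u_t=\Delta_fu=0$ then forces $u$ to be a genuine constant; this last parabolic step is common to both parts.

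For part (1), $u$ is already positive, so I apply the estimate directly with $T=R$. Interpreting the hypothesis $u(x,t)=e^{o(\rho(x)+|t|)}$ as the two-sided statement $\log u=o(\rho+|t|)$, there is a function $h(R)\downarrow 0$ with $\log A\le h(R)R$ and $-\log\inf u\le h(R)R$ on $Q_{R,R}(\partial M)$. Thus $D\le 1+2h(R)R$ and $\log(A/u(x_0,t_0))\le h(R)R+O(1)$, and after expansion the right-hand side is dominated by a multiple of $\sqrt{h(R)}+h(R)$, which tends to zero. Hence $|\nabla u|(x_0,t_0)=0$ for every interior point, and the conclusion follows from the scheme above.

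For part (2), $u$ is not assumed positive, so one must reduce to the positive case via an additive shift $v:=u+C$, where $C=C(R,T,\varepsilon)$ is chosen large enough that $v>0$ on $Q_{R,T}(\partial M)$. The sublinear growth $u=o(\rho+|t|)$ gives, for any $\varepsilon>0$, a bound $|u|\le\varepsilon(\rho+|t|)$ outside a fixed compact set, so $C$ can be taken comparable to $\varepsilon(R+T)$ plus a fixed constant. The shifted function $v$ still solves the $f$-heat equation and inherits the Dirichlet condition together with $v_\nu\ge 0$ and $\partial_tv\le 0$, so Theorem \ref{GE2} applies to $v$. Since $\sup_{Q_{R,T}}v$ and $\inf_{Q_{R,T}}v$ are both comparable to $C$, the oscillation $D_v$ and the logarithmic factor $\sqrt{1+\log(A_v/v(x_0,t_0))}$ are uniformly controlled, and one converts the relative bound $|\nabla v|/v\le\text{RHS}$ into an absolute bound on $|\nabla u|(x_0,t_0)=|\nabla v|(x_0,t_0)$.

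The principal difficulty lies precisely in this last conversion. Because $v(x_0,t_0)\asymp C\asymp\varepsilon(R+T)$ grows with $R+T$, while the estimate factor $1/R+1/\sqrt{T}$ decays only polynomially, the product $v(x_0,t_0)\cdot(1/R+1/\sqrt{T})$ does not vanish under a naive scaling; one has to tune $R$, $T$, $C$ jointly against $\varepsilon$ so that this product is of order $\varepsilon$ after cancellation, using the sublinear (rather than merely sublogarithmic) size of the growth rate. This is the one nonroutine step of the corollary. Once it is handled, $\varepsilon\to 0$ produces $|\nabla u|\equiv 0$ in the interior, and the parabolic equation $u_t=\Delta_fu=0$ finishes both parts.
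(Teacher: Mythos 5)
Your part (1) is essentially the paper's own argument: apply Theorem \ref{GE2} with $K=0$ on $Q_{R,R}(\partial M)$ at a fixed interior point, use $\log u=o(\rho+|t|)$ to see that both $\sqrt{D}/R\cdot\sqrt{1+\log(A/u)}\lesssim o(R)/R$ and $\sqrt{1+\log(A/u)}/\sqrt{R}\lesssim\sqrt{o(R)/R}$ tend to zero, and finish with $u_t=\Delta_fu=0$. (The paper avoids your two\mbox{-}sided interpretation of $e^{o(\rho+|t|)}$ by first replacing $u$ with $u+1$, which kills the $-\log\inf u$ term in $D$; only the upper bound on $\log u$ is then needed. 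This is a cosmetic difference.)

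Part (2) contains a genuine gap, and it is exactly the step you defer. After the additive shift $v=u+C$ with $C\asymp\sup_{Q_{R,T}}|u|$, the ratio $\sup v/\inf v$ is bounded, so $D$ and the factor $\sqrt{1+\log(A_v/v)}$ are $O(1)$ and provide no further gain; the estimate then yields
\[
|\nabla u|(x_0,t_0)\;\le\;c\,v(x_0,t_0)\left(\frac{1}{R}+\frac{1}{\sqrt{T}}\right)\;\lesssim\;\delta(R+T)\,(R+T)\left(\frac{1}{R}+\frac{1}{\sqrt{T}}\right),
\]
where $|u|\le\delta(s)\,s$ with $s=\rho+|t|$ and $\delta(s)\to0$. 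But $(R+T)\cdot T^{-1/2}\ge\sqrt{T}$, and one must send $T\to\infty$ to kill the $1/\sqrt{T}$ term, so for any choice $T=R^{\alpha}$ the right-hand side is at best of order $\delta(R)\sqrt{R}$, which tends to zero only if $u=o\!\left(\sqrt{\rho+|t|}\right)$. No joint tuning of $R$, $T$, $C$, $\varepsilon$ can repair this: the obstruction is structural, coming from the mismatch between the linear-in-$|t|$ growth allowance and the parabolic factor $1/\sqrt{T}$ in the gradient estimate. The Souplet--Zhang argument you invoke proves the Liouville theorem under the growth $o(\rho+\sqrt{|t|})$, which is precisely the rate this normalization can handle. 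For fairness, the paper itself gives no proof of (2) either (it writes only ``similar as in [SZ06]''), and the same objection applies to the statement as printed; but as a proof, your proposal is incomplete at this point, and the deferred ``nonroutine step'' appears not to be fillable by the route you describe.
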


\begin{remark}
We would like to emphasize that Corollary \ref{liouville} is better than those in Corollary 1.7 in
\cite{KS20}. Because gradient estimates in Theorem \ref{GE2} (also for manifolds)
control the growth at infinity better than those in Theorem \ref{keita2}. Indeed,
our gradient estimate is sharp in both spatial and time directions due to an example
that $M=(-\infty, 0]$, $f=2x$, $u=e^{x-t}$ is an ancient positive solution to the
$f$-heat equation, $u_\nu\ge0$, $\partial_tu\le0$ on $\partial M\times(-\infty, 0]$
and its growth near infinity is $e^{|x|+|t|}$.
\end{remark}

The rest of this paper is organized as follows. In Section \ref{sec2}, we will
introduce some basic facts on smooth metric measure space with boundary. In
particular, we will give the Laplacian comparison with boundary. In Section \ref{sec3},
we will prove gradient estimates for $f$-harmonic functions with Dirichlet boundary
condition, i.e. Theorem \ref{GE1}. In Section \ref{sec3}, we will prove Theorem
\ref{GE2} and Corollaries \ref{Liouhar} and \ref{liouville}.


\section{Preliminaries}\label{sec2}
	\setcounter{equation}{0}
\quad\quad We start to introduce some basic facts. For a smooth metric measure space
$(M^n, g, e^{-f})dv$ with boundary $\partial M$, the distance function from
the boundary is defined by
\[
\rho:=\rho_{\partial M}:=d(\cdot, \partial M).
\]
Note that $\rho$ is smooth outside of the cut locus for the boundary (see \cite{Sak17}).
The Laplacian comparison for the distance function on smooth metric measure
spaces with boundary was first proved in \cite{WZZ}; see also \cite{Saku}.
Here, we give a general statement due to \cite{Sakurai17}.
We assume that $x\in B_{R}(\partial M)$, let $B(x, d(x, \partial M))\subset M$ be the largest geodesic ball centered at $x$ and $z=\partial B(x, d(x, \partial M))\cap \partial M$. Suppose that $\gamma_{z, x}(t)$ is the geodesic curve starting from $z$ and connecting $z$ and $x$.  We see that $\gamma'_{z,x}(0)$ is the unit inner normal vector for $\partial M$ at $z$. Since $\gamma_{z,x}(d(x, \partial M))=x$ and $d(x, \partial M)\leq R$, Lemma 6.1 in \cite{Sakurai17} implies the
Laplacian comparison theorem for $\infty$-Bakry-\'{E}mery Ricci curvature, which will be used in our proof of Theorems \ref{GE1} and \ref{GE2}.

\begin{theorem}[\cite{Sakurai17}]\label{Lapcom}
Let $(M^n, g, e^{-f}dv)$ be a complete smooth metric measure space with compact boundary $\partial M$. Assume that ${\rm Ric}_f\geq-K$ and $H_f\ge-L$ for some constants $K\ge0$ and $L\in\mathbb{R}$. Then
\[
\Delta_f\rho(x)\leq KR+L
\]
for all $x\in B_R(\partial M)$.
\end{theorem}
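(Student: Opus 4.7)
The plan is to prove the bound by a Riccati-type argument along the minimizing geodesic $\gamma_{z,x}$ from the foot $z\in\partial M$ to $x$: first pin down the boundary value of $\Delta_f\rho$ on $\partial M$, then use the weighted Bochner identity to control the evolution of $\Delta_f\rho$ along the geodesic, and finally integrate. For the main computation I work outside the cut locus of $\partial M$, so that $\rho$ is smooth on a neighborhood of $\gamma_{z,x}$.

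For the boundary value, since $\gamma'_{z,x}(0)$ is the inward unit normal at $z$ one has $\nabla\rho(z)=-\nu$, and the classical identity $\Delta\rho|_{\partial M}=-H$ (with $H$ taken with respect to $\nu$) combined with the definition $H_f = H - \nabla f\cdot\nu$ yields
\[
\Delta_f\rho(z) = \Delta\rho(z) - \langle\nabla f(z),\nabla\rho(z)\rangle = -H(z) + \nabla f(z)\cdot\nu = -H_f(z) \leq L.
\]
For the Riccati step, the weighted Bochner formula gives
\[
\tfrac{1}{2}\Delta_f|\nabla\rho|^2 = |\Hess\rho|^2 + \langle\nabla\Delta_f\rho,\nabla\rho\rangle + \Ric_f(\nabla\rho,\nabla\rho).
\]
Since $|\nabla\rho|\equiv 1$ on the smooth set and $\nabla\rho(\gamma_{z,x}(t))=\gamma'_{z,x}(t)$, the function $\varphi(t):=\Delta_f\rho(\gamma_{z,x}(t))$ satisfies
\[
\varphi'(t) = -|\Hess\rho|^2 - \Ric_f(\gamma'_{z,x},\gamma'_{z,x}) \leq K,
\]
using the curvature hypothesis together with $|\Hess\rho|^2\geq 0$. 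Integrating from $0$ to $d(x,\partial M)\leq R$ and combining with $\varphi(0)\leq L$ gives
\[
\Delta_f\rho(x) \leq L + K\cdot d(x,\partial M) \leq L + KR.
\]

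The main technical point is handling $x$ that lies in the cut locus of $\partial M$, where $\rho$ is merely Lipschitz and the above pointwise calculation is not directly available. I would address this via the standard Calabi upper-barrier trick: for small $\varepsilon>0$ replace $\rho$ near $x$ by $\rho_\varepsilon(y):=\varepsilon+d(y,\gamma_{z,x}(\varepsilon))$, which is smooth in a neighborhood of $x$, lies above $\rho$ by the triangle inequality, and agrees with $\rho$ at $x$. Applying the Bochner/Riccati argument to $\rho_\varepsilon$ on the sub-segment $[\varepsilon,d(x,\partial M)]$ and letting $\varepsilon\to 0$ yields the inequality in the barrier (hence distributional) sense, which is what is needed for the maximum-principle applications in Theorems \ref{GE1} and \ref{GE2}; the careful implementation is the content of Lemma 6.1 in \cite{Sakurai17}.
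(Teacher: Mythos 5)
First, a point of comparison: the paper does not prove Theorem \ref{Lapcom} at all --- it is quoted from Lemma 6.1 of \cite{Sakurai17} --- so your write-up is being measured against the standard argument rather than against anything in the text. Your treatment of the smooth locus is correct and is the expected proof: the boundary value $\Delta_f\rho|_{\partial M}=-H_f\le L$ (your sign bookkeeping is consistent with the paper's convention $H_f=H-\nabla f\cdot\nu$ for the outward normal $\nu$ and $\nabla\rho=-\nu$ on $\partial M$), the weighted Bochner identity applied to $|\nabla\rho|^2\equiv1$, discarding $|\Hess\rho|^2\ge0$ (correctly, since no trace inequality is available for $\Delta_f$ under a mere $\Ric_f$ bound), and integrating $\varphi'\le K$ from $\varphi(0)\le L$.

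The gap is in your cut-locus step. The barrier $\rho_\varepsilon(y)=\varepsilon+d(y,\gamma_{z,x}(\varepsilon))$ is indeed smooth near $x$, lies above $\rho$, and touches it at $x$; but your claim that ``applying the Bochner/Riccati argument to $\rho_\varepsilon$ on $[\varepsilon,d(x,\partial M)]$'' yields the same bound fails, because the Riccati data for the function $d(\cdot,\gamma_{z,x}(\varepsilon))$ at the left endpoint is not $\le L$: it blows up like $(n-1)/(t-\varepsilon)$ near the base point. The inequality $\varphi'\le K$ integrated from a $+\infty$ initial condition gives nothing, and the usual rescue --- keeping $|\Hess\rho|^2\ge(\Delta\rho)^2/(n-1)$ to absorb the singularity --- is exactly what is unavailable here, since that trace inequality does not pass to $\Delta_f\rho$ under a lower bound on $\Ric_f$ alone. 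The correct barrier for distance to the boundary is $\varepsilon+d(\cdot,\Sigma_\varepsilon)$ with $\Sigma_\varepsilon=\{\rho=\varepsilon\}$ the inner parallel hypersurface near $\gamma_{z,x}(\varepsilon)$: its initial Riccati data at $t=\varepsilon$ is $-H_f^{\Sigma_\varepsilon}=\Delta_f\rho(\gamma_{z,x}(\varepsilon))\le L+K\varepsilon$ by the smooth computation on $[0,\varepsilon]$, and then your integration goes through. Alternatively, note that for the purposes of this paper the issue never arises: in the proofs of Theorems \ref{GE1} and \ref{GE2} Calabi's trick is applied to the auxiliary functions $G$ and $\psi w$ so that the maximum point may be assumed to lie outside $\mathrm{Cut}(\partial M)$, and only the smooth-locus version of the comparison is ever invoked.
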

Next, we recall the so-called Reilly formula (see \cite{Lima, PeterLi, XMZ}).
\begin{proposition}\label{Reilly}
For all $\varphi\in\mathcal{C}^\infty(M)$,
\begin{equation*}
\begin{aligned}
(|\varphi|^2)_\nu=&2\varphi_\nu(\Delta_f\varphi-\Delta_{\partial M, f}(\varphi\mid_{\partial M}-\varphi_\nu H_f))
+2g_{\partial M}(\nabla_{\partial M}\varphi\mid_{\partial M}, \nabla_{\partial M}\varphi_\nu)\\
&+{\rm II}(\nabla_{\partial M}\varphi\mid_{\partial M}, \nabla_{\partial M}\varphi_{\partial M}).
\end{aligned}
\end{equation*}
\end{proposition}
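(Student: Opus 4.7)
The formula in Proposition \ref{Reilly} is a pointwise boundary identity, so the plan is to reduce everything to algebra involving $\nabla\varphi$, $\Hess\varphi$, the shape operator, and the weighted mean curvature at a fixed boundary point, then assemble. I would work along a frame $\{e_1,\dots,e_{n-1},\nu\}$ in which $e_1,\dots,e_{n-1}$ is orthonormal and tangent to $\partial M$ at the chosen point, and extended by parallel transport along $\nu$ if convenient.

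First I would differentiate $|\nabla\varphi|^{2}$ in the normal direction to obtain
\[
(|\nabla\varphi|^{2})_{\nu}=2\,\Hess\varphi(\nu,\nabla\varphi)
=2\varphi_{\nu}\,\Hess\varphi(\nu,\nu)+2\,\Hess\varphi(\nu,\nabla_{\partial M}\varphi|_{\partial M}),
\]
after splitting $\nabla\varphi=\nabla_{\partial M}\varphi|_{\partial M}+\varphi_{\nu}\nu$ on $\partial M$. So the job reduces to rewriting the two pieces $\Hess\varphi(\nu,\nu)$ and $\Hess\varphi(\nu,X)$ (for $X$ tangent) in terms of boundary data.

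For the tangential piece, for $X\in T\partial M$ I would use $X(\varphi_{\nu})=\Hess\varphi(X,\nu)+\langle\nabla\varphi,\nabla_{X}\nu\rangle$. Since $\nabla_{X}\nu$ is tangent to $\partial M$ and $\mathrm{II}(X,Y)=\langle\nabla_{X}\nu,Y\rangle$, this gives
\[
\Hess\varphi(\nu,X)=g_{\partial M}(\nabla_{\partial M}\varphi_{\nu},X)-\mathrm{II}(X,\nabla_{\partial M}\varphi|_{\partial M}),
\]
which accounts for the last two terms in the stated identity once $X=\nabla_{\partial M}\varphi|_{\partial M}$ is plugged in. For the normal piece, I would use the standard decomposition of the ambient Laplacian along $\partial M$,
\[
\Delta\varphi=\Delta_{\partial M}(\varphi|_{\partial M})+H\varphi_{\nu}+\Hess\varphi(\nu,\nu),
\]
obtained by expanding $\sum_{i}e_{i}(e_{i}\varphi)$ and using $\nabla_{e_i}e_i=(\nabla_{e_i}e_i)^{T}+\mathrm{II}(e_i,e_i)\nu$. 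Subtracting $\nabla f\cdot\nabla\varphi=\nabla_{\partial M}f\cdot\nabla_{\partial M}\varphi+f_{\nu}\varphi_{\nu}$ from both sides converts this to the weighted identity
\[
\Hess\varphi(\nu,\nu)=\Delta_{f}\varphi-\Delta_{\partial M,f}(\varphi|_{\partial M})-H_{f}\varphi_{\nu},
\]
using $H_{f}=H-f_{\nu}$ for the outward normal. Substituting this and the tangential formula back into the expression for $(|\nabla\varphi|^{2})_{\nu}$ yields the claimed identity.

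The routine part is the two computations of $\Hess\varphi(\nu,\nu)$ and $\Hess\varphi(\nu,X)$; the only real obstacle is sign and normalization bookkeeping — in particular, tracking whether $\nu$ is outer or inner (which flips the sign relation between $H$ and $f_{\nu}$ in the definition of $H_{f}$), and whether the second fundamental form is defined via $\langle\nabla_{X}\nu,Y\rangle$ or its negative. I would fix the outward normal convention at the outset so that the weighted decomposition of $\Delta_{f}$ matches the sign used for $H_{f}$ in Section \ref{sec2}, and then the assembly is purely algebraic. No integration is needed, since the proposition is stated pointwise; the classical integrated Reilly formula would only follow after multiplying by $e^{-f}$ and integrating over $\partial M$, which is not required here.
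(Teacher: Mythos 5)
Your argument is correct, and it is essentially the standard derivation; note, however, that the paper does not prove this proposition at all --- it is quoted with a citation to \cite{Lima, PeterLi, XMZ}, so there is no in-paper proof to compare against. Your route (write $(|\nabla\varphi|^2)_\nu=2\Hess\varphi(\nu,\nabla\varphi)$, split $\nabla\varphi=\nabla_{\partial M}\varphi|_{\partial M}+\varphi_\nu\nu$, compute $\Hess\varphi(\nu,X)=g_{\partial M}(\nabla_{\partial M}\varphi_\nu,X)-\mathrm{II}(X,\nabla_{\partial M}\varphi|_{\partial M})$ for tangential $X$, and get $\Hess\varphi(\nu,\nu)$ from the splitting $\Delta_f\varphi=\Delta_{\partial M,f}(\varphi|_{\partial M})+H_f\varphi_\nu+\Hess\varphi(\nu,\nu)$) is exactly the pointwise computation underlying the references, and your attention to the outward-normal convention $H_f=H-f_\nu$ matches the convention fixed in Section \ref{sec2}. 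One genuinely useful byproduct of your derivation: it shows that the proposition as printed contains typographical slips --- the left side should read $(|\nabla\varphi|^2)_\nu$, the term $-\varphi_\nu H_f$ belongs outside the argument of $\Delta_{\partial M,f}$, and the second fundamental form term should carry a factor $2$ and a sign depending on the convention for $\mathrm{II}$, i.e.\ the correct identity is
\begin{equation*}
(|\nabla\varphi|^2)_\nu=2\varphi_\nu\bigl(\Delta_f\varphi-\Delta_{\partial M,f}(\varphi|_{\partial M})-\varphi_\nu H_f\bigr)
+2g_{\partial M}(\nabla_{\partial M}\varphi|_{\partial M},\nabla_{\partial M}\varphi_\nu)
-2\,\mathrm{II}(\nabla_{\partial M}\varphi|_{\partial M},\nabla_{\partial M}\varphi|_{\partial M}).
\end{equation*}
This is consistent with the only way the proposition is actually used in the paper (Case 2 of the proof of Theorem \ref{GE1} and the boundary argument in the proof of Theorem \ref{GE2}), where the Dirichlet condition forces $\nabla_{\partial M}\varphi|_{\partial M}=0$ and only the first bracket survives. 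So your proof is sound and, if anything, more informative than the bare citation in the paper.
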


For the $f$-harmonic function, we have a useful inequality, which was
proved in Theorem 2.2 of \cite{Li05}.
\begin{lemma}[see \cite{Li05}]\label{lixd}
Let $(M, g, e^{-f}dv)$ be a smooth metric measure space with ${\rm Ric}_f^m\geq-(m-1)K$. If $u$ is a positive $f$-harmonic function and $m\geq n$, then $\phi:=|\nabla\log u|$ satisfies
\[
\Delta_f\phi\geq-(m-1)K\phi-2\frac{m-2}{m-1}\frac{\langle\nabla\phi, \nabla u\rangle}{u}+\frac{\phi^3}{n-1}.
\]
\end{lemma}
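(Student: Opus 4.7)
The plan is to apply the weighted Bochner formula to $v := \log u$. Since $u$ is $f$-harmonic, one computes $\Delta_f v = \Delta_f u/u - |\nabla v|^2 = -\phi^2$, and the weighted Bochner identity reads
\[
\frac{1}{2}\Delta_f|\nabla v|^2 = |\Hess v|^2 + \langle\nabla v,\nabla \Delta_f v\rangle + \Ric_f(\nabla v,\nabla v).
\]
Substituting $\nabla \Delta_f v = -2\phi\nabla\phi$ together with the identity $\tfrac{1}{2}\Delta_f \phi^2 = \phi\Delta_f\phi + |\nabla\phi|^2$ converts this into a pointwise identity for $\phi\Delta_f\phi$ at points where $\phi > 0$. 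What remains is to bound the two geometric quantities $|\Hess v|^2$ and $\Ric_f(\nabla v,\nabla v)$ sharply from below.

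For the curvature term I would use the defining identity $\Ric_f = \Ric_f^m + \frac{\nabla f\otimes\nabla f}{m-n}$, which together with the hypothesis gives $\Ric_f(\nabla v,\nabla v) \geq -(m-1)K\phi^2 + \frac{s^2}{m-n}$, where $s := \nabla f\cdot\nabla v$. For the Hessian I would invoke the refined Kato inequality at a point where $\nabla v \neq 0$: fix an orthonormal frame with $e_1 = \nabla v/\phi$, note that $|\nabla\phi|^2 = \sum_i H_{1i}^2$ for $H_{ij}$ the components of $\Hess v$, and bound $\sum_{i\ge 2} H_{ii}^2 \geq (n-1)^{-1}\bigl(\sum_{i\ge 2}H_{ii}\bigr)^2$ to obtain
\[
|\Hess v|^2 \geq 2|\nabla\phi|^2 - H_{11}^2 + \frac{1}{n-1}(\Delta v - H_{11})^2,
\]
in which $H_{11} = \langle\nabla v,\nabla\phi\rangle/\phi$. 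Dropping the nonnegative piece $|\nabla\phi|^2 - H_{11}^2$ and using $\Delta v = \Delta_f v + s = -\phi^2 + s$, these two ingredients assemble into
\[
\phi\Delta_f\phi \geq \frac{(\phi^2 - s + H_{11})^2}{n-1} + \frac{s^2}{m-n} - 2\phi\langle\nabla v,\nabla\phi\rangle - (m-1)K\phi^2.
\]

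The decisive algebraic step is then the elementary inequality $\frac{X^2}{p} + \frac{Y^2}{q} \geq \frac{(X+Y)^2}{p+q}$ applied with $X = \phi^2 - s + H_{11}$, $Y = s$, $p = n-1$, $q = m-n$, which packages the two curvature-dependent summands into a single term of the form $\frac{(\phi^2 + H_{11})^2}{m-1}$ and eliminates the auxiliary quantity $s$. Expanding this square and using the identity $\phi H_{11} = \langle\nabla v,\nabla\phi\rangle = u^{-1}\langle\nabla u,\nabla\phi\rangle$ to process the cross term, then combining with the existing $-2\phi\langle\nabla v,\nabla\phi\rangle$ contribution and dividing through by $\phi > 0$, yields the advertised inequality. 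The main technical obstacle is precisely the Cauchy--Schwarz splitting above: one must pair the refined-Kato remainder against the $m$-Bakry--\'Emery correction $\frac{s^2}{m-n}$ with the correct weights so that the unknown $s$ cancels and the coefficient of the leading $\phi^3$ term emerges sharply.
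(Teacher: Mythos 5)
Your argument is, step for step, the proof of Theorem 2.2 in \cite{Li05}; the paper itself does not prove this lemma but only cites it, so there is no competing in-paper argument to compare against. The weighted Bochner formula applied to $v=\log u$, the refined Kato-type bound $|\Hess v|^2\ge 2|\nabla\phi|^2-H_{11}^2+\tfrac{1}{n-1}(\Delta v-H_{11})^2$, the splitting $\Ric_f=\Ric_f^m+\tfrac{\nabla f\otimes\nabla f}{m-n}$, and the weighted Cauchy--Schwarz step $\tfrac{X^2}{n-1}+\tfrac{Y^2}{m-n}\ge\tfrac{(X+Y)^2}{m-1}$ that eliminates $s=\nabla f\cdot\nabla v$ are exactly Li's ingredients, and each of your intermediate identities checks out.

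There is, however, one discrepancy you should not pass over silently: your computation yields the cubic term $\frac{\phi^3}{m-1}$, whereas the lemma as printed claims the stronger term $\frac{\phi^3}{n-1}$. This is not a defect of your proof --- the printed constant is a misprint. The paper's own application of the lemma in \eqref{goodterm} uses $\frac{\phi^2}{m-1}$, the statement in \cite{Li05} has $m-1$, and the $n-1$ version is in fact false in general: on flat $\mathbb{R}^n$ with $f=ax_1$ and $a^2=(m-n)(m-1)K$ one has $\Ric_f^m\ge -(m-1)K$, and $u=e^{ax_1}$ is a positive $f$-harmonic function with $\phi\equiv a$ and $\nabla\phi=0$, so the inequality with $n-1$ would force $a^2\le (n-1)(m-1)K$, i.e.\ $m\le 2n-1$, which fails for large $m$. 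Your Cauchy--Schwarz pairing is sharp precisely when $\tfrac{\phi^2-s+H_{11}}{n-1}=\tfrac{s}{m-n}$, which is why $\frac{1}{m-1}$ cannot be improved to $\frac{1}{n-1}$ by this route (or at all). Two small points of housekeeping: the degenerate case $m=n$ (where $f$ is constant, $s=0$, and the term $\tfrac{s^2}{m-n}$ is simply absent) should be noted separately, and the inequality should be asserted at points where $\phi>0$ (or in the barrier sense), which is all that is used in the proof of Theorem \ref{GE1}.
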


For the $f$-heat equation, we have another useful inequality, which was established in
\cite{DungDung19} for the manifold case.
	\begin{lemma}\label{eq:l1}
Under the same assumption as in Theorem \ref{GE2}, let $w={{\left| \nabla h \right|}^2}$,
where $h=\sqrt{1+\log(A/u)}$.  For any $\left( x,t \right)\in Q_{R, T}(\partial M)$,
\begin{equation}\label{eq:2.3}
\Delta_f w-{w_t}\ge-2(n-1)K w
+2\left(2h-\frac{1}{h} \right)\left\langle \nabla w,\nabla h\right\rangle+2\left(2+\frac{1}{h^2} \right){w^2}.
\end{equation}
\end{lemma}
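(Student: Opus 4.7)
The plan is to reduce \eqref{eq:2.3} to the weighted Bochner identity applied to $h$ itself, so the $f$-heat equation for $u$ enters only through a clean evolution identity for $h$. I will first derive that evolution identity, then substitute into Bochner, and finally throw away the Hessian term and invoke the curvature hypothesis.

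For the evolution identity, squaring the definition of $h$ gives $h^2=1+\log A-\log u$, so
\[
2h\,\nabla h=-\frac{\nabla u}{u},\qquad 2h\,h_t=-\frac{u_t}{u}=-\frac{\Delta_f u}{u},
\]
and taking the weighted Laplacian of $h^2$ yields
\[
2h\,\Delta_f h+2|\nabla h|^2=-\Delta_f\log u=-\frac{\Delta_f u}{u}+\frac{|\nabla u|^2}{u^2}.
\]
The first identity rearranges to $|\nabla u|^2/u^2=4h^2 w$, and subtracting the time identity from the Laplacian identity then produces the key relation
\[
\Delta_f h-h_t=\Bigl(2h-\frac{1}{h}\Bigr)w.
\]

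For the Bochner step, I apply the weighted Bochner formula
\[
\Delta_f|\nabla h|^2=2|\Hess h|^2+2\langle\nabla\Delta_f h,\nabla h\rangle+2\Ric_f(\nabla h,\nabla h)
\]
and subtract $w_t=2\langle\nabla h,\nabla h_t\rangle$ to obtain
\[
\Delta_f w-w_t=2|\Hess h|^2+2\Ric_f(\nabla h,\nabla h)+2\bigl\langle\nabla(\Delta_f h-h_t),\nabla h\bigr\rangle.
\]
Differentiating the key relation by the product rule gives
\[
\nabla\Bigl[\Bigl(2h-\tfrac{1}{h}\Bigr)w\Bigr]=\Bigl(2+\frac{1}{h^2}\Bigr)w\,\nabla h+\Bigl(2h-\frac{1}{h}\Bigr)\nabla w,
\]
so the last inner product contributes $2(2+1/h^2)w^2$ together with the cross term $2(2h-1/h)\langle\nabla w,\nabla h\rangle$. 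Dropping the nonnegative Hessian term and using $\Ric_f\ge-(n-1)K$ yields \eqref{eq:2.3}.

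The main obstacle is the algebra of the first step: the specific choice $h=\sqrt{1+\log(A/u)}$ is calibrated precisely so that $4h^2|\nabla h|^2=|\nabla u|^2/u^2$, which is what collapses the raw Bochner output into the sharp coefficients $2h-1/h$ and $2+1/h^2$ that will later drive the Souplet--Zhang-type maximum principle argument. No boundary geometry is needed here, so \eqref{eq:2.3} is a purely interior pointwise inequality on $Q_{R,T}(\partial M)$; the boundary conditions $u_\nu\ge 0$, $\partial_t u\le 0$, and the Dirichlet condition will only enter when this Bochner-type inequality is coupled to a cutoff and applied at a boundary maximum in the proof of Theorem \ref{GE2}.
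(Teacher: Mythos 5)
Your proposal is correct and follows essentially the same route as the paper: derive the evolution identity $\Delta_f h-h_t=(2h-1/h)w$ from the $f$-heat equation, insert it into the weighted Bochner formula for $|\nabla h|^2$, expand $\nabla\bigl[(2h-1/h)w\bigr]$ by the product rule so that the $\langle\nabla h_t,\nabla h\rangle$ term cancels $w_t$, and finally drop $2|\Hess h|^2\ge 0$ while bounding $\Ric_f(\nabla h,\nabla h)\ge-(n-1)Kw$. Your bookkeeping (grouping $\Delta_f h-h_t$ before differentiating) is a slightly cleaner presentation of the identical computation, and your remark that the paper's hypothesis $\Ric_f\ge-(n-1)K$ and no boundary data are all that is used here is accurate.
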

\begin{proof}
Let $u(x,t)$ be a solution to the $f$-heat equation and $0<u\le A$ for some constant $A$  in
$Q_{R, T}(\partial M)$. Let $B=Ae$ and $h=\sqrt{\log(B/u)}=\sqrt{1+\log(A/u)}\ge 1$.
Then $u=Be^{-h^2}$ and $\log u=\log B-{{h}^{2}}$. By the $f$-heat equation, we directly
compute that
\begin{equation}\label{eq:2.2}
{{h}_{t}}=\Delta_f h+|\nabla h|^2\left( \frac{1}{h}-2h \right).
\end{equation}

On the other hand, by the Bochner formula and $\Ric_f\ge-(n-1)K$, we deduce that
\begin{equation*}
\begin{aligned}
\Delta_fw-w_t&= 2|\nabla^2h|^2+2\Ric_f(\nabla h, \nabla h)+2\langle\nabla\Delta_f h,\nabla h\rangle-w_t\\
			&\ge -2(n-1)Kw+2\langle \nabla \Delta_f h,\nabla h\rangle-w_t.
\end{aligned}
\end{equation*}
		By the equality \eqref{eq:2.2}, we obtain
\begin{equation}\label{eq:2.4}
\begin{aligned}
			\Delta_fw-{{w}_{t}}
			&\ge -2(n-1)Kw+2\left\langle \nabla \left( {{h}_{t}}+{{\left| \nabla h \right|}^{2}}\left( 2h-\frac{1}{h} \right) \right),\nabla h \right\rangle -{{w}_{t}}\\
			&\ge -2(n-1)Kw+2\left\langle \nabla \left( {{h}_{t}} \right),\nabla h \right\rangle +2\left( 2h-\frac{1}{h} \right)\left\langle \nabla \left( {{\left| \nabla h \right|}^{2}} \right),\nabla h \right\rangle\\
			&\quad+2{{\left| \nabla h \right|}^{2}}\left\langle \nabla \left( 2h-\frac{1}{h} \right),\nabla h \right\rangle  -{{w}_{t}}.
\end{aligned}
\end{equation}
Observe that $2\langle\nabla(f_t),\nabla h\rangle =(|\nabla h|^2)_t=w_t$,
and
$$\nabla \left( 2h-\frac{1}{h} \right)=2\nabla h+\frac{\nabla h}{{{h}^{2}}}=\left( 2+\frac{1}{{{h}^{2}}} \right)\nabla h.$$
Hence, \eqref{eq:2.4} implies the desired inequality.
\end{proof}

\section{Gradient estimates for harmonic functions}\label{sec3}
\quad\quad In this section we will follow an argument of \cite{KS20}
to prove Theorem \ref{GE1}.
\begin{proof}[Proof of Theorem \ref{GE1}]
Define $G:=(R^2-\rho^2)\phi$, where $\phi$ is as in Lemma \ref{lixd}. Assume $G$
obtains its maximal at $x_1\in B_R(\partial M)$.

\textbf{Case 1: }If $x_1\in B_R(\partial M)\setminus\partial M$, we
may assume that $x_1\notin {\rm Cut}(\partial M)$ by the Calabi's argument.
At $x_1$, we have
$\nabla G=0$ and $\Delta_fG\le0$. Consequently,
$$\frac{\nabla \rho^2}{R^2-\rho^2}=\frac{\nabla \phi}{\phi}, \quad -\frac{\Delta_f\rho^2}{R^2-\rho^2}+\frac{\Delta_f\phi}{\phi}-\frac{2\left\langle\nabla\rho^2, \nabla\phi\right\rangle}{(R^2-\rho^2)\phi}\leq0.$$
Hence
\begin{equation}\label{kse1}
 \frac{\Delta_f\phi}{\phi}-\frac{\Delta_f\rho^2}{R^2-\rho^2}-2\frac{|\nabla\rho^2|^2}{(R^2-\rho^2)^2}\le 0.
\end{equation}
Since  ${\rm Ric}_f^m\ge-(m-1)K$ (and hence ${\rm Ric}_f\ge-(m-1)K$),
by the Laplacian comparison of Theorem \ref{Lapcom}, we have
\begin{equation}\label{kse2}
\Delta_f\rho^2=2|\nabla\rho|^2+2\rho\Delta_f\rho\le 2+2\rho((m-1)KR+L)\le2+2LR+2(m-1)KR^2,
\end{equation}
where we used $|\nabla\rho|=1$ and $\rho\le R$.
Since $|\nabla\rho^2|=4\rho^2|\nabla\rho|^2=4\rho^2$, Lemma \ref{lixd}
together with \eqref{kse1} and \eqref{kse2} implies
\begin{equation}
\begin{aligned}\label{goodterm}
0&\ge\frac{\Delta_f\phi}{\phi}-\frac{2+2LR+2(m-1)KR^2}{R^2-\rho^2}-\frac{8\rho^2}{(R^2-\rho^2)^2}\\
&\ge-(m-1)K-\frac{2(m-2)}{m-1}\frac{\langle\nabla\phi, \nabla u\rangle}{\phi u}+\frac{\phi^2}{m-1}\\
&\quad-\frac{2+2LR+2(m-1)KR^2}{R^2-\rho^2}-\frac{8\rho^2}{(R^2-\rho^2)^2}.
\end{aligned}
\end{equation}
at $x_1$. By the definition of $G$ and
\[
\frac{\langle\nabla\phi, \nabla u\rangle}{\phi u}=\frac{2\rho\langle\nabla\phi, \nabla u\rangle}{(R^2-\rho^2)u}
\le\frac{2\rho\phi}{R^2-\rho^2},
\]
\eqref{goodterm} can be written as
\begin{equation*}
\begin{aligned}
0&\ge\frac{G^2}{m-1}-\frac{4(m-2)}{m-1}\rho G-[2+2LR+2(m-1)KR^2](R^2-\rho^2)-8\rho^2-(m-1)K(R^2-\rho^2)^2\\
&\ge\frac{G^2}{m-1}-\frac{4(m-2)}{m-1}R G-[2+2LR+2(m-1)KR^2]R^2-8R^2-(m-1)KR^4\\
&\ge\frac{G^2}{2(m-1)}-[(c_m+2LR)]R^2-3(m-1)KR^4
\end{aligned}
\end{equation*}
at $x_1$, where we used the Cauchy-Schwarz inequality. This gives
\[
G(x_1)\le c_m\left(\sqrt{1+LR}\,R+\sqrt{K}R^2\right).
\]
By the definition of $G$, we conclude
\[
\frac{3R^2}{4}\sup\limits_{B_{R/2}(\partial M)}\frac{|\nabla u|}{u}\le
c_m\left(\sqrt{1+LR}\,R+\sqrt{K}R^2\right)
\]
and the desired result follows.

\textbf{Case 2: }If $x_1\in\partial M$, the proof follows by the argument of \cite{KS20}
which is originated in \cite{LiYau2}. We include it for the completeness.
Indeed, at $x_1$, $G_\nu\geq0$, $\phi_\nu=\frac{G_\nu}{R^2}\geq0$ and hence
$(\phi^2)_\nu\geq0$. Since $u$ satisfies the Dirichlet boundary condition and
$u_\nu\geq0$, then $|\nabla u|=u_\nu$. We note that
$\phi=|\nabla u|$ and $\log u$ also satisfies Dirichlet boundary condition. So
Proposition \ref{Reilly} gives
\begin{align*}
0\le(\phi^2)_\nu&=(|\nabla\log u|^2)_\nu=2(\log u)_\nu(\Delta_f\log u-(\log u)_\nu H_f)\\
&=2\frac{u_\nu}{u}\left(-\frac{|\nabla u|^2}{u^2}-\frac{u_\nu}{u}H_f\right)=2\frac{(u_\nu)^2}{u^2}\left(-\frac{u_{\nu}}{u}-H_f\right),
\end{align*}
which implies
$$\phi(x_1)=\frac{u_\nu}{u}\le - H_f\le L.$$
Hence $G(x_1)=R^2\phi(x_1)\le LR^2$. This gives
$$\frac{3R^2}{4}\sup\limits_{B_{R/2}(\partial M)}\frac{|\nabla u|}{u}\le LR^2.$$
The proof is complete.
\end{proof}

\section{Gradient estimates for heat equations}\label{sec4}
To prove Theorem \ref{GE2}, we introduce a smooth cut-off function as in \cite{KS20}.
This cut-off function is originally used in \cite{LY86} (see also \cite{SZ06}).
	\begin{lemma}\label{eq:l2}
	 There exists a smooth cut-off function $\psi=\psi(x,t)$ supported  in
$Q_{R,T}(\partial M)$ such that
		\begin{itemize}
			\item[(i)] $\psi =\psi \left( \rho_{\partial M}(x),t \right)\equiv \psi \left( r,t \right);\psi \left( r,t \right)=1$ in ${{Q}_{\frac{R}{2},\frac{T}{2}}(\partial M)},\quad 0\le \psi \le 1.$
			\item[(ii)] $\psi$ is decreasing as a radial function in the spatial variables, and $\frac{\partial \psi }{\partial r}=0$ in ${{Q}_{{R}/{2}\;,T}(\partial M)}.$
			\item[(iii)] $\left| \frac{\partial \psi }{\partial t} \right|\frac{1}{{{\psi }^{{1}/{2}\;}}}\le \frac{C}{T}$,
\quad $\left| \frac{\partial \psi }{\partial r} \right|\le \frac{{{C}_{\varepsilon }}{{\psi }^{\varepsilon }}}{{{R}}}\quad \text{and}\quad \left| \frac{{{\partial }^{2}}\psi }{{{\partial }^{2}}{{r}^{2}}} \right|\le \frac{{{C}_{\varepsilon }}{{\psi }^{\varepsilon }}}{{{R}^{2}}}, \quad 0<\varepsilon <1.$
		\end{itemize}
	\end{lemma}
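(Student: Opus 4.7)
The plan is to construct $\psi$ as a product $\psi(x,t) := \bar\psi(\rho_{\partial M}(x))\,\eta(t)$, where $\bar\psi$ is a smooth radial spatial cut-off in the distance-from-boundary variable and $\eta$ is a smooth temporal cut-off. This product structure automatically gives property (i) provided each factor is $1$ on the appropriate smaller set and $0$ outside the larger set, and it gives property (ii) because $\bar\psi$ depends only on $r = \rho_{\partial M}(x)$ and is constant on $[0, R/2]$.

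For the spatial factor, I would build a smooth nonincreasing $\bar\psi \colon [0,\infty)\to[0,1]$ with $\bar\psi\equiv 1$ on $[0,R/2]$, $\bar\psi\equiv 0$ on $[R,\infty)$, and satisfying $|\bar\psi'(r)| \le C_\varepsilon R^{-1}\bar\psi^{\varepsilon}(r)$ and $|\bar\psi''(r)|\le C_\varepsilon R^{-2}\bar\psi^{\varepsilon}(r)$ on the transition interval. This is the classical Li--Yau--Souplet--Zhang construction from \cite{LY86, SZ06}: fix an exponent $p\ge 2/(1-\varepsilon)$, model $\bar\psi(r)$ on $((R-r)/R)^{p}$ near $r=R$, and glue it smoothly to $1$ on $[0,R/2]$ using a standard mollified bump. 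A direct differentiation yields $|\bar\psi'|\lesssim (p/R)\,\bar\psi^{(p-1)/p}$ and $|\bar\psi''|\lesssim (p^{2}/R^{2})\,\bar\psi^{(p-2)/p}$, and the choice of $p$ ensures both exponents dominate $\varepsilon$.

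For the temporal factor, I would take a smooth nondecreasing $\eta \colon \RR\to[0,1]$ with $\eta\equiv 0$ on $(-\infty,-T]$, $\eta\equiv 1$ on $[-T/2,0]$, and $|\eta'(t)|\le CT^{-1}\eta^{1/2}(t)$. Near $t=-T$ one models $\eta(t)$ on $((t+T)/T)^{2}$, which gives $|\eta'|=(2/T)\eta^{1/2}$, and smooths the transition near $t=-T/2$. The bounds in (iii) then follow from the chain rule together with $|\nabla\rho_{\partial M}|=1$ and $\Delta$-estimates reducing to $\bar\psi''(\rho)|\nabla\rho|^{2}+\bar\psi'(\rho)\Delta\rho$; the former is already bounded via $\bar\psi^{\varepsilon}$, and the latter gets absorbed in the subsequent application through Theorem \ref{Lapcom}.

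The main technical wrinkle is not the one-variable calculus above, but the fact that $\rho_{\partial M}$ is only Lipschitz on $M$ and fails to be smooth on the cut locus of $\partial M$, so strictly speaking $\psi$ is only Lipschitz. This is the usual obstruction in such cut-off arguments and is handled by Calabi's trick: near any point where the auxiliary function $G$ attains its interior maximum, one replaces $\rho_{\partial M}$ by a smooth upper barrier constructed along a minimizing geodesic to $\partial M$, for which the Laplacian comparison of Theorem \ref{Lapcom} still applies with an error that is harmless in the limit. Since in the application of Lemma \ref{eq:l2} one only needs smoothness of $\psi$ at the interior maximum point of the auxiliary quantity, this resolves the issue without affecting any of the stated bounds.
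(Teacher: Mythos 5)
The paper gives no proof of this lemma at all: it simply cites \cite{LY86}, \cite{SZ06} and \cite{KS20} for the construction. Your proposal supplies exactly the standard argument those references use, and it is correct: the product ansatz $\psi=\bar\psi(\rho_{\partial M}(x))\,\eta(t)$ with $\bar\psi$ modeled on $((R-r)/R)^{p}$, $p\ge 2/(1-\varepsilon)$, and $\eta$ modeled on $((t+T)/T)^{2}$ yields (i)--(iii) by the one-variable computations you indicate. Two small remarks. First, a fixed polynomial profile satisfies the $\psi^{\varepsilon}$ bounds only for the prescribed $\varepsilon$ (not uniformly for all $\varepsilon$ close to $1$), so the lemma should be read as ``for each $\varepsilon$ there is a cut-off''; in the application only $\varepsilon=1/2$ and $\varepsilon=3/4$ are needed, so a single $p\ge 4$ suffices, and if one insists on a genuinely $C^{\infty}$ profile rather than $C^{p-1}$ at the vanishing end, one takes $p$ a large integer or accepts the finite regularity, which is all that is used. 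Second, your closing paragraph on the non-smoothness of $\rho_{\partial M}$ at the cut locus is a correct observation, but it concerns the use of $\psi$ in the proof of Theorem \ref{GE2} (where the paper indeed invokes Calabi's argument to assume $x_{1}\notin\mathrm{Cut}(\partial M)$) rather than the statement of the lemma itself; including it does no harm but is not part of what needs to be proved here.
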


Using the cut-off function, we have
	\begin{lemma}\label{mainestimate}
If $\Phi:=\psi(\Delta_f w-{w_t})+w\Delta_f\psi-w{{\psi}_t}-2\frac{|\nabla\psi|^2}{\psi}w$, then
\[
\psi {w^2}\le c\left(\frac{D^2+1}{R^4}+\frac{1}{T^2}+K^2\right)+\frac{\Phi}{4},
\]
where $D:=1+\log A-\log(\underset{Q_{R,T}(\partial M)}{\mathop{\inf}}\,u)$. Here $c$ denotes a constant
depending only on $n$ whose value may change from line to line in the following.
	\end{lemma}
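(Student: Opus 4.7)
The plan is to merge the Bochner-type inequality of Lemma \ref{eq:l1} with the cut-off function $\psi$ of Lemma \ref{eq:l2} and then absorb the resulting error terms by Young's inequality. First, I would multiply the inequality
\[
\Delta_f w - w_t \ge -2(n-1)Kw + 2(2h - 1/h)\langle\nabla w, \nabla h\rangle + 2(2 + 1/h^2)w^2
\]
by $\psi \ge 0$ and substitute the identity $\psi(\Delta_f w - w_t) = \Phi - w\Delta_f\psi + w\psi_t + 2|\nabla\psi|^2 w/\psi$, which follows directly from the definition of $\Phi$. Since $h \ge 1$ forces $2(2 + 1/h^2) \ge 4$, rearranging yields the working inequality
\[
4\psi w^2 \le \Phi - w\Delta_f\psi + w\psi_t + 2\frac{|\nabla\psi|^2}{\psi}w + 2(n-1)K\psi w - 2(2h-1/h)\psi\langle\nabla w, \nabla h\rangle.
\]

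Next I would bound the routine error terms. Combining the cut-off properties in Lemma \ref{eq:l2}(iii) with the Laplacian comparison Theorem \ref{Lapcom} (using $L = 0$ since $H_f \ge 0$) gives $\Delta_f\psi = \psi''(r) + \psi'(r)\Delta_f\rho \ge -C\sqrt{\psi}(1/R^2 + K)$, hence by Young's inequality $-w\Delta_f\psi \le \delta \psi w^2 + c_\delta(1/R^4 + K^2)$. The same pattern yields $w\psi_t \le \delta \psi w^2 + c_\delta/T^2$, $2|\nabla\psi|^2 w/\psi \le \delta \psi w^2 + c_\delta/R^4$ (choosing $\varepsilon = 3/4$ in property (iii) so that $|\nabla\psi|^2/\psi \le C\sqrt{\psi}/R^2$), and $2(n-1)K\psi w \le \delta\psi w^2 + c_\delta K^2$.

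The delicate term is the gradient cross-term. I would use the identity $\psi\nabla w = \nabla(\psi w) - w\nabla\psi$ to split
\[
-2(2h - 1/h)\psi\langle\nabla w, \nabla h\rangle = -2(2h - 1/h)\langle\nabla(\psi w), \nabla h\rangle + 2(2h - 1/h)w\langle\nabla\psi, \nabla h\rangle.
\]
The first piece vanishes at a maximum of $\psi w$, which is precisely where this lemma will be invoked in Theorem \ref{GE2}; this is the content of the extra $\Phi/4$ on the right side of the statement. For the second piece, the bounds $1 \le h \le \sqrt{D}$ (hence $|2h - 1/h| \le 2\sqrt{D}$) and $|\nabla\psi| \le C\psi^{3/4}/R$ produce
\[
\bigl|2(2h - 1/h)w\langle\nabla\psi, \nabla h\rangle\bigr| \le \frac{C\sqrt{D}}{R}(\psi w^2)^{3/4},
\]
and Young's inequality at the conjugate exponents $4/3$ and $4$ bounds this by $\delta\psi w^2 + c_\delta D^2/R^4$. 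This is the source of the $D^2/R^4$ factor in the conclusion.

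Summing all estimates and choosing $\delta$ small enough that the accumulated $\psi w^2$ contributions absorb into the left side yields $\psi w^2 \le c(D^2 + 1)/R^4 + c/T^2 + cK^2 + \Phi/4$. The principal obstacle is the cross-term $\psi\langle\nabla w, \nabla h\rangle$: a genuinely pointwise bound on it fails because $|\nabla w|$ carries $\Hess(h)$, which is a priori uncontrolled. The decomposition through $\nabla(\psi w)$ is the essential device — it channels the intractable part into $\Phi$ (which is non-positive at the maximum of $\psi w$ where the lemma is ultimately applied) while leaving a manageable $(\psi w^2)^{3/4}$-scale remainder that Young's inequality at the critical exponent converts into the desired $D^2/R^4$ bound.
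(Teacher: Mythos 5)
Your proposal is correct and follows essentially the same route as the paper's proof: plug the differential inequality of Lemma \ref{eq:l1} into the definition of $\Phi$, isolate $4\psi w^2$ (you use $2(2+1/h^2)\ge 4$ directly where the paper carries the factor $2h^2/(1+2h^2)$, a cosmetic difference), and absorb every error term by Young's inequality using the cut-off bounds with $\varepsilon=3/4$ and $1\le h\le\sqrt{D}$, the cross term being treated exactly as in \eqref{eq:2.7} once one uses $\nabla(\psi w)=0$ at the maximum --- a step the paper performs silently when it replaces $\psi\langle\nabla w,\nabla h\rangle$ by $-w\langle\nabla\psi,\nabla h\rangle$, and which you make explicit. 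One small mislabel: the $\Phi/4$ on the right-hand side is not ``the content of'' the vanishing $\langle\nabla(\psi w),\nabla h\rangle$ piece --- $\Phi$ is simply retained as defined and only later shown to be nonpositive at the maximum --- but this does not affect the validity of your argument.
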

	\begin{proof}
Plugging \eqref{eq:2.3} into $\Phi$,
		\begin{align}
	\Phi&\ge -2(n-1)K\psi w-2\left( 2h-\frac{1}{h} \right)\left\langle \nabla h,\nabla \psi  \right\rangle w\nonumber\\
		&\quad+2\left( 2+\frac{1}{{{h}^{2}}} \right)\psi {{w}^{2}}+w{{\Delta_f }}\psi -w{{\psi }_{t}}-2\frac{{{\left| \nabla \psi  \right|}^{2}}}{\psi }w.\nonumber
		\end{align}
This is equivalent to
		\begin{align}
		4\psi {{w}^{2}}\le& \frac{2{{h}^{2}}}{1+2{{h}^{2}}}2(n-1)K\psi w-\frac{4h\left( 1-2{{h}^{2}} \right)}{1+2{{h}^{2}}}\left\langle \nabla h,\nabla \psi  \right\rangle w\nonumber\\
		&-\frac{2{{h}^{2}}}{1+2{{h}^{2}}}w{{\Delta_f }}\psi +\frac{2{{h}^{2}}}{1+2{{h}^{2}}}w{{\psi }_{t}}+\frac{4{{h}^{2}}}{1+2{{h}^{2}}}\frac{{{\left| \nabla \psi  \right|}^{2}}}{\psi }w+\Phi.\nonumber
		\end{align}
Since $0<\frac{2{{h}^{2}}}{1+2{{h}^{2}}}\le 1$ and $0<\frac{2}{1+2{{h}^{2}}}\le 2$,  we get
\begin{equation}\label{eq:2.5}
	4\psi {w^2}\le 2(n-1)K\psi w-\frac{4h\left( 1-2{h^2} \right)}{1+2{{h}^{2}}}\langle \nabla h,\nabla \psi\rangle w-\frac{2h^2}{1+2h^2}w\Delta_f\psi+w|{{\psi}_t}|+\frac{2{{| \nabla \psi|}^2}}{\psi }w+\Phi.
\end{equation}
In the following we will estimate each term of the right hand side of \eqref{eq:2.5}.
Since $\Ric_f\ge-(n-1)K$ and $H_f\ge 0$, by the Laplacian comparison in Theorem \ref{Lapcom},  		
$\Delta_f\rho\le(n-1)KR$. Using this and Lemma \ref{eq:l2}, we have	
\begin{equation}\label{eq:2.6}
\begin{aligned}
-\frac{2{{h}^{2}}}{1+2{{h}^{2}}}w\Delta_f \psi &=-\frac{2{{h}^{2}}}{1+2{{h}^{2}}}w\left( {{\psi }_r}\Delta_f\rho+{{\psi }_{rr}}|\nabla \rho|^2\right)\\
&\le \frac{2{{h}^{2}}}{1+2{{h}^{2}}}w\left(|{\psi}_r|(n-1)KR+|{\psi}_{rr}|\right)\\
&\le {{\psi }^{1/2\ }}w\frac{\left| {{\psi }_{rr}} \right|}{{{\psi }^{1/2\ }}}+(n-1)KR{{\psi }^{1/2}}w\frac{\left| {{\psi }_r} \right|}{{{\psi }^{1/2}}}\\
&\le \frac{3}{5}\psi {{w}^{2}}+c\left[ {{\left( \frac{\left| {{\psi }_{rr}} \right|}{{{\psi }^{1/2\ }}} \right)}^2}
+\left((n-1)KR\frac{\left| {{\psi }_r} \right|}{{{\psi }^{1/2\ }}} \right)^2\right]\\
&\le \frac{3}{5}\psi {{w}^{2}}+\frac{c}{{{R}^4}}+cK^2.
\end{aligned}
\end{equation}
By the Young inequality, we also have
\begin{equation}\label{eq:2.7}
\begin{aligned}
-\frac{4h\left( 1-2{{h}^{2}} \right)}{1+2{{h}^{2}}}\left\langle \nabla h,\nabla \psi  \right\rangle w
&\le 4h\frac{\left| 1-2{{h}^{2}} \right|}{1+2{{h}^{2}}}\left| \nabla \psi  \right|\left| \nabla h \right|w
\\&\le 4h\left| \nabla \psi  \right|{{w}^{3/2}}=4h\left| \nabla \psi  \right|{{\psi }^{-3/4\ }}{{\left( \psi {{w}^{2}} \right)}^{3/4\ }}\\
&\le \frac{3}{5}\psi {{w}^{2}}+ch^4\frac{{{\left| \nabla \psi  \right|}^{4}}}{{{\psi }^{3}}}\\
&\le \frac{3}{5}\psi {{w}^{2}}+\frac{c{{D}^{2}}}{{{R}^{4}}},
\end{aligned}
\end{equation}
where $D:=\log B-\log( \underset{Q_{R,T}(\partial M)}{\mathop{\inf }}\,u)$.
By the Cauchy-Schwarz inequality, it is not hard to see that the following estimates hold.
We first estimate that
\begin{equation}\label{eq:2.8}
2(n-1)K\psi w\le \frac{3}{5}\psi {{w}^{2}}+{4cK^{2}};
\end{equation}
for $w\left| {{\psi }_{t}} \right|$, we estimate that	
\begin{equation}\label{eq:2.9}
w\left| {{\psi }_{t}} \right|={{\psi }^{{1}/{2}\;}}w\frac{\left| {{\psi }_{t}} \right|}{{{\psi }^{{1}/{2}\;}}}
\le \frac{3}{5}{{\left( {{\psi }^{{1}/{2}\;}}w \right)}^{2}}+c{{\left( \frac{\left| {{\psi }_{t}} \right|}{{{\psi }^{{1}/{2}\;}}} \right)}^{2}}\le \frac{3}{5}\psi {{w}^{2}}+\frac{c}{T^2};
\end{equation}
we also estimate that
\begin{equation}\label{eq:2.10}
\frac{2|\nabla\psi|^2}{\psi}w=2\left( {{| \nabla \psi|}^2}{{\psi }^{-3/2}} \right)\left( {{\psi }^{1/2\;}}w \right)
\le \frac{3}{5}\psi {w^2}+c\frac{{{| \nabla \psi|}^4}}{{{\psi }^3}}\le \frac{3}{5}\psi {w^2}+\frac{c}{{R^4}}.
\end{equation}	
We substitute \eqref{eq:2.6}-\eqref{eq:2.10} into the right hand side of \eqref{eq:2.5}, and get that		
\[
\psi w^2\le c\left( \frac{D^2+1}{R^4}+\frac{1}{T^2}+K^2\right)+\frac{\Phi}{4}.
\]
and the result follows.
\end{proof}
Now we are ready to prove Theorem \ref{GE2} by improving the argument of \cite{KS20}.
\begin{proof}[Proof of Theorem \ref{GE2}]
We may assume $u$ is non-constant in $Q_{R/2, T/2}$. If $u$ is constant,
it is trivial. Hence we let $\psi w$ be a positive function at a point in $Q_{R/2, T/2}(\partial M)$
and achieves its maximal value at  $\left(x_1,t_1\right)$ in $Q_{R/2, T/2}(\partial M)$.
We first claim $x_1\not\in\partial M$. Indeed, by contradiction, assume that $x_1\in\partial M$.
Then at $(x_1, t_1)$, $(\psi w)_\nu\geq0$, consequently $\psi_\nu w+\psi w_\nu=\psi w_\nu\geq0$;
in particular, $w_\nu\geq0$. Since $w=|\nabla\sqrt{\log(B/u)}|^2$, and $h=\sqrt{\log(B/u)}$ also
satisfies the Dirichlet boundary condition, Proposition \ref{Reilly} implies
$0\le w_\nu=(|\nabla{h}|^2)_\nu=2{h}_\nu(\Delta_f h-h_\nu H_f)$.
Since $u$ satisfies the Dirichlet boundary condition, then $|\nabla u|=u_{\nu}$. Hence,
\[
h_\nu=-\frac{u_\nu}{2u\ \sqrt{\log(B/u)}}=-\frac{|\nabla u|}{2u\ \sqrt{\log(B/u)}}=-w^{1/2}.
\]
We now compute
\begin{equation*}
\begin{aligned}
	\Delta_fh&={\rm div}\left(-\frac{\nabla u}{2u\ \sqrt{\log(B/u)}}\right)-\left\langle\nabla f, \nabla h\right\rangle\\
	&=-\frac{\Delta u}{2u\ \sqrt{\log(B/u)}}-\frac{1}{2}\left\langle\nabla u, \nabla\left(\frac{1}{u\ \sqrt[]{\log(B/u)}}\right)\right\rangle+\left\langle\nabla f, \frac{\nabla u}{2u\ \sqrt[]{\log(B/u)}}\right\rangle\\
	&=-\frac{\Delta_f u}{2u\ \sqrt{\log(B/u)}}-\frac{1}{2}\left(-\frac{|\nabla u|^2}{u^2\sqrt{\log(B/u)}}+\frac{1}{2}\frac{|\nabla u|^2}{u^2\left(\log(B/u)\right)^{3/2}}\right)\\
	&=-\frac{u_t}{2uh}+\left(\frac{2h^2-1}{h}\right)w.
\end{aligned}
\end{equation*}
	Plugging these identities into the above inequality, we obtain
\begin{equation*}
\begin{aligned}
	0&\leq -2w^{1/2}\left(-\frac{u_t}{2uh}+\left(\frac{2h^2-1}{h}\right)w+w^{1/2}H_f\right)\\
	&\leq-2w\left(\left(\frac{2h^2-1}{h}\right)w^{1/2}+H_f\right)
\end{aligned}
\end{equation*}
where we used $u_t\leq0$ in the last inequality. This implies
\[
\left(\frac{2h^2-1}{h}\right)w^{1/2}\leq-H_f\leq0.
\]
Since $h\ge1$, $w=0$ at $(x_1, t_1)$. This means $\psi w\equiv0$ on $Q_{R,T}(\partial M)$. This is a contracdiction.

We have shown that $x_1\not\in\partial M$. By the standard argument of Calabi (see \cite{Cal57}), we may assume that $x_1\not\in\partial M\cup{\rm Cut}(\partial M)$. Now at $\left(x_1,t_1\right)$, Lemma \ref{mainestimate} gives
\begin{equation}\label{econclu}
w^2\le c\left(\frac{D^2+1}{R^4}+\frac{1}{T^2}+K^2 \right)+\frac{\Phi}{4}.
\end{equation}
On the other hand, since $(x_1, t_1)$ is a maximal point, we have
$\nabla(\psi w)=0$, $\Delta_f(\psi w)\le 0$ and $(\psi w)_t\ge 0$
at $(x_1, t_1)$. Thus, at $(x_1,t_1)$,
		$$0\ge {{\Delta_f }}\left( \psi w \right)-{{\left( \psi w \right)}_{t}}=\psi \left( {{\Delta_f }}w-{{w}_{t}} \right)+w\left( {{\Delta_f }}\psi -{{\psi }_{t}} \right)+2\left\langle \nabla w,\nabla \psi  \right\rangle,$$
that is, $\Phi(x_1, t_1)\leq0$. Hence, \eqref{econclu} implies
		$$(\psi w)(x, t)\leq (\psi w)(x_1, t_1)\leq c^{1/2}\left( \frac{D+1}{R^2}+\frac{1}{T}+K \right)$$
for all $(x, t)\in Q_{R,T}(\partial M)$. Since $\psi\equiv1$ in $Q_{R/2, T/2}(\partial M)$, by the definition of $w$, we get
$$\frac{|\nabla u|}{u}\leq 2c^{1/4}\left(\frac{\sqrt{D}+1}{R}+\frac{1}{\sqrt{T}}+\sqrt{K}\right)\sqrt{1+\log\frac{A}{u}}.$$
The proof is complete.
\end{proof}
Corollary \ref{Liouhar} follows by letting $R\to\infty$. So we only give a proof of Corollary \ref{liouville}.
\begin{proof}[Proof of Corollary \ref{liouville}]
Now $K=L=0$ and $u_t=\Delta_f u$. Let $v=u+1$ an then $v_t=\Delta_f v$. Moreover, $u$ and $v$ have
the same growth at infinity. Hence, without loss of generality, we may assume $u\geq 1$.
Fix $(x_0, t_0)$. We apply Theorem \ref{GE2} to $Q_{R,R}(\partial M)=B_R(\partial M)\times [t_0-R, t_0]$
and get
\begin{equation*}
\begin{aligned}
\frac{|\nabla u|}{u}(x_0, t_0)
&\le c_m\left( \frac{\sqrt{1+\log A}}{R}+\frac{1}{\sqrt{R}}\right)\sqrt{1+\log \frac{A}{u(x_0, t_0)}}\\
&\le c_m\left( \frac{\sqrt{o(R+|R|)}}{R}+\frac{1}{\sqrt{R}}\right)\sqrt{o(R+|R|)-\log(u(x_0, t_0))}.
\end{aligned}
\end{equation*}
Letting $R\to\infty$, we get $|\nabla u(x_0, t_0)|=0$ and $u$ is constant because $(x_0, t_0)$ is arbitrary.
The proof of (1) is complete.

The proof of (2) is similar as in \cite{SZ06}, we omit the details.
\end{proof}
\begin{remark}
In Theorem 4.2 of \cite{WZZ}, the authors proved that $\rho$ is finite
if some extra conditions on $K$ and $L$ are given. We point out that our gradient
estimates and Liouville theorems work well in this case. In fact, if $\rho$ is finite,
then all boundary balls $B_{\partial M}(R)$ are the same if $R$ is large. The proof
still work without any change. Therefore, we can let $R\to\infty$ even when $\rho$ is finite.
\end{remark}

	\vskip 0.3cm
	\noindent
\section*{Acknowledgment: }The second named author thanks Prof. Keita Kunikawa for useful comments and suggestions on weighted manifolds with $\infty$-Bakry-\'{E}mery curvature. In particular, he thanks Prof. Keita Kunikawa for pointing out Lemma 6.1 in \cite{Sakurai17} that leads to an improvement of this paper.

	\bigskip
\address{{\it Ha Tuan Dung}\\
Faculty of Mathematics  \\
Hanoi Pedagogical University No. 2  \\
Xuan Hoa, Vinh Phuc, Vietnam
}
{hatuandung.hpu2@gmail.com}
\address{ {\it Nguyen Thac Dung}\\
Faculty of Mathematics - Mechanics - Informatics \\
Hanoi University of Science (VNU) \\
Ha N\^{o}i, Vi\^{e}t Nam and \\
Thang Long Institute of Mathematics and Applied Sciences (TIMAS)\\
Thang Long Univeristy\\
Nghiem Xuan Yem, Hoang Mai\\
HaNoi, Vietnam
}
{dungmath@gmail.com}
\address{{\it Jia-Yong Wu}\\
Department of Mathematics,\\
 Shanghai University, China
}
{wujiayong@shu.edu.cn}
\end{document}